\newtheorem{theorem}{Theorem}
\newtheorem{lemma}[theorem]{Lemma}
\newtheorem{corollary}[theorem]{Corollary}
\newcommand{\beq}{\begin{equation}}
\newcommand{\eeq}{\end{equation}}
\newcommand{\beqa}{\begin{eqnarray}}
\newcommand{\eeqa}{\end{eqnarray}}
\newcommand{\beqas}{\begin{eqnarray*}}
\newcommand{\eeqas}{\end{eqnarray*}}
\newcommand{\bi}{\begin{itemize}}
\newcommand{\ei}{\end{itemize}}
\newcommand{\ba}{\begin{array}}
\newcommand{\ea}{\end{array}}
\newcommand{\nn}{\nonumber}
\def\eqnok#1{(\ref{#1})}
\def\argmin{{\rm argmin}}
\def\Argmin{{\rm Argmin}}
\def\vgap{\vspace*{.1in}}
\newcommand{\bbr}{\Bbb{R}}
\def\cX{{\cal X}}
\def\lb{{\rm lb}}
\title{
Generalized Uniformly Optimal Methods \\
for Nonlinear Programming
\thanks{August, 2015.
This research was partially supported by NSF grants CMMI-1254446, CMMI-1537414, DMS-1319050, DMS-1016204 and ONR grant N00014-13-1-0036.}
}
\author{Saeed Ghadimi\thanks{sghadimi@ufl.edu,
Department of Industrial and Systems Engineering,
University of Florida, Gainesville, FL 32611.}
\and Guanghui Lan\thanks{glan@ise.ufl.edu,
http://www.ise.ufl.edu/glan,
Department of Industrial and Systems Engineering,
University of Florida, Gainesville, FL 32611.}
\and Hongchao Zhang\thanks{hozhang@math.lsu.edu,
https://www.math.lsu.edu/$\sim$hozhang,
Department of Mathematics, Louisiana State University,
Baton Rouge, LA 70803.}
}
\date{August 28, 2015}
\begin{document}
\maketitle
% ABSTRACT-------------------------
\begin{abstract}
In this paper, we present a generic framework to extend existing uniformly optimal convex programming algorithms
to solve more general nonlinear, possibly nonconvex, optimization problems.
The basic idea is to incorporate a local search step (gradient descent or Quasi-Newton iteration) into these uniformly optimal convex programming methods, and then enforce a monotone decreasing property of the function values computed along the trajectory. Algorithms of these types
will then achieve the best known complexity for nonconvex problems, and the optimal complexity for convex ones without requiring any problem parameters. As a consequence, we can have a unified treatment for a general class of nonlinear programming problems regardless of their convexity and smoothness level. In particular, we show that the accelerated gradient and level methods, both originally designed for solving convex optimization problems only, can be used for solving both convex and nonconvex problems uniformly. In a similar vein, we show that some well-studied techniques for nonlinear programming, e.g., Quasi-Newton iteration, can be embedded into optimal convex optimization algorithms to possibly further enhance their numerical performance. Our theoretical and algorithmic developments are complemented by some promising numerical results obtained for solving a few important nonconvex and nonlinear data analysis problems in the literature.
\end{abstract}
%KEYWORDS--------------------------

\vgap

\noindent {\bf keywords}
nonconvex optimization, uniformly optimal methods, parameter free methods, gradient descent methods,
Quasi-Newton methods, accelerated gradient methods, accelerated level methods

\noindent {\bf AMS 2000 subject classification:} 90C25, 90C06, 90C22, 49M37

\thispagestyle{plain}
\markboth{S. GHADIMI, G. LAN, AND H. ZHANG}{GENERALIZED UNIFORMLY OPTIMAL METHODS FOR NONLINEAR PROGRAMMING}

%%%%%%%%%%%%%%%%%%%%%%%%%%%%%%%%%%%%%%%%%%%%
\setcounter{equation}{0}

\section{Introduction}
\label{sec_intro}
%%%%%%%%%%%%%%%%%%%%%%%%%%%%%%%%%%%%%%%%%%%%
In this paper, we consider the following nonlinear programming problem
\beq \label{NLP}
\Psi^* := \min\limits_{x \in X} \{\Psi(x) := f(x) + \cX(x)\},
\eeq
where  $X \subseteq \bbr^n$ is a closed convex set, $f:X \to \bbr$ is possibly nonconvex,
and $\cX$ is a simple but possibly non-differentiable convex function with known structure
(e.g. $\cX(x) = \|x\|_1$ or $\cX$ even vanishes). Moreover, we assume that $f$ has H\"{o}lder continuous gradient on $X$ i.e.,
there exists $ H >0$ and $\nu \in [0,1]$ such that
\beq \label{f_holder1}
\|f'(y) - f'(x)\| \le H \|y-x\|^\nu \quad \forall \, x, y \in X,
\eeq
where $f'(x)$ is the gradient of $f$ at $x$ and $\|\cdot\|$ is the Euclidean norm in $\bbr^n$.
The above assumption in \eqnok{f_holder1} covers a wide range class of objective funtions,
including smooth functions (with $\nu = 1$),
weakly smooth functions (with $\nu \in (0,1)$), and nonsmooth convex functions with bounded subgradients (with $\nu = 0$).

The complexity of solving \eqnok{NLP} has been well-understood under the convex setting, i.e., when $f$ is convex.
According to the classic complexity theory by Nemirovski and Yudin~\cite{nemyud:83},
if $f$ is a general convex function with bounded subgradients (i.e., $\nu = 0$),
then the number of subgradient evaluations of $f$ required to find a solution $\bar x \in X$ such that $\Psi(\bar x) -\Psi^* \le \epsilon$
cannot be smaller than ${\cal O}(1/\epsilon^2)$ when $n$ is sufficiently large.
Here $\Psi^*$ denotes the optimal value of \eqnok{NLP}.
Such a lower complexity bound can be achieved by different first-order methods, including the subgradient and
mirror descent methods in \cite{nemyud:83}, and the bundle-level type methods in \cite{LNN,BenNem05-1}.
Moreover, if $f$ is a smooth convex function with Lipschitz continuous gradients (i.e., $\nu = 1$),
then the number of gradient evaluations of $f$  cannot be smaller than ${\cal O}(1/\sqrt{\epsilon})$ for sufficiently large $n$.
Such a lower complexity bound can be achieved by the well-known Nesterov's accelerated gradient (AG) method,
which was originally developed in~\cite{Nest83-1} for the smooth case with $\cX = 0$, and recently extended
for an emerging class of composite problems with a relatively simple nonsmooth term $\cX$~\cite{Nest13-1,BecTeb09-2,tseng08-1}.

%When $\nu \in (0,1)$, the function $f$ satisfying \eqnok{f_holder1} is called weakly smooth.
While traditionally different
classes of convex optimization problems were solved by using different algorithms,
the last few years have seen a growing interest in the development of unified algorithms that can achieve
the optimal complexity for solving different classes of convex problems, preferably without requiring the input
of any problem parameters.
Lan showed in \cite{Lan10-3} that Nesterov's AG method in \cite{Nest83-1,Nest04} can achieve
the optimal complexity for solving not only smooth,
but also general nonsmooth optimization problems (i.e., $\nu = 0$ in  \eqnok{f_holder1})
by introducing a novel stepsize policy and some new convergence analysis techniques.
Devolder, Glineur and Nesterov \cite{DeGlNe10-1} further generalized this development and showed that
the AG method can exhibit the optimal ${\cal O}(1/\epsilon^{\frac{2}{1+3\nu}})$ complexity for solving weakly
smooth problems. These methods in \cite{Lan10-3,DeGlNe10-1} still require the input of
problem parameters like $\nu$ and $H$, and even the iteration limit $N$.
In a different research line, Lan~\cite{Lan13-1} generalized the bundle-level type methods, originally designed for nonsmooth problems, for both smooth and weakly smooth problems. He showed that these accelerated level methods are uniformly optimal for convex optimization in the sense that they can achieve the optimal complexity bounds without requiring the input of any problem parameters
and the iteration limit. Simplified variants of these methods were also proposed in \cite{CheLanOuyZha14} for solving ball-constrained and unconstrained convex optimization problems. Recently, Nesterov \cite{Nest14} presented another uniformly optimal method, namely, the universal accelerated gradient method for nonsmooth and (weakly) smooth convex programming. This method only needs the target accuracy as an input, which, similar to those in \cite{CheLanOuyZha14, Lan13-1}, completely removes the need of inputting problem parameters in \cite{Lan10-3,DeGlNe10-1}.

%are not only optimal for nonsmooth optimization, but also for (weakly) smooth ones. These methods also do not require any problem parameter. %However, it is unclear that this bundle-level type methods converge for nonconvex optimization or not.

In general, all the above-mentioned methods require the convexity on the objective function to establish their convergence results.
When $f$ is possibly nonconvex, a different termination criterion according to the projected gradient $g_{_{X,k}}$(see e.g., \eqnok{def_proj_grad})
is often employed to analyze the complexity of the solution methods. While there is no known lower iteration complexity bound for first-order methods to solve the problem \eqnok{NLP}, the (projected) gradient-type methods \cite{Nest04,CarGouToi10-1, GhaLanZhang14} achieve the best-known iteration complexity ${\cal O}(1/\epsilon)$ to find a solution such that $\|g_{_{X,k}}\|^2 \le \epsilon$ when $f$ in \eqnok{NLP} has Lipschitz continuous gradient. Recently, Ghadimi and Lan~\cite{GhaLan15} generalized Nesterov's AG method to solve this class of nonconvex  nonlinear optimization problems. They showed that this generalized AG method
can not only achieve the best-known ${\cal O}(1/\epsilon)$ iteration complexity  for finding approximate stationary points for nonconvex problems,
but also exhibit the optimal iteration complexity if the objective function turns out to be convex. However, in oder to apply this method,
we need to assume that all the generated iterates lie in a bounded set and that the gradients of $f$ are be Lipschitz continuous, and
also requires the input of a few problem parameters a priori. Our main goal of this paper is to understand whether we can generalize some of the aforementioned uniformly optimal methods to solve a broader class of nonlinear programming given in \eqnok{NLP}, where function $f$ could be nonconvex and only weakly smooth or nonsmooth (see \eqnok{f_holder1}).
In addition to these theoretical aspects, our study has also been motivated by the following applications.
\begin{itemize}
\item In many machine learning problems, the regularized loss function in the objective is given as a summation of convex and nonconvex terms (see e.g., \cite{FanLi01-1,Mairal09, MasBaxBarFre99} ). A unified approach may help us in exploiting the possible local convex property of the objective function in this class of problems, while globally these problems are not convex.

\item Some optimization problems are given through a black-box oracle (see e.g., ~\cite{Fu02-1,AsmGlynn00,Law07}). Hence, both the smoothness level of the objective function and its convex property are unknown. A unified algorithm for both convex and nonconvex optimization and for handling the
smoothness level in the objective function automatically could achieve better convergence performance for this class of problems.
\end{itemize}

Our contribution in this paper mainly consists of the following aspects. First, we generalize Nesterov's AG method and present a unified accelerated gradient (UAG) method for solving a subclass of problem \eqnok{NLP}, where $f$ has Lipschitz continuous gradients on $X$ i.e., there exists $ L >0$ such that
\beq \label{f_smooth}
\|f'(y) - f'(x)\| \le L\|y-x\| \quad \mbox{for any } x, y \in X.
\eeq
Note that the above relation is a special case of \eqnok{f_holder1} with $\nu=1$ and $H$ replaced by $L$.
The basic idea of this method is to combine a gradient descent step with Nesterov's AG method and
maintain the monotonicity of the objective function value at the iterates generated by
the algorithm.  Hence, the UAG method contains the gradient projection and Nesterov's AG methods
as special cases (see the discussions after presenting Algorithm~\ref{alg_UAG}).
We show that this UAG method is uniformly optimal for the above-mentioned class of nonlinear programming in the sense that it achieves the best known iteration complexity (${\cal O}(1/\epsilon)$) to find at least one $k$ such that $\|g_{_{X,k}}\|^2 \le \epsilon$, and exhibits the optimal complexity (${\cal O}(1/\sqrt{\epsilon})$) to find a solution $\bar x \in X$ such that $\Psi(\bar x) -\Psi^* \le \epsilon$, if $f$ turns out to be convex. While these results had been also established in \cite{GhaLan15} for the generalization of Nestrov's AG method, the UAG method does not require the boundedness assumption
on the generated trajectory.

Second, we generalize the UAG method for solving a broader class of problems with $\nu \in [0,1]$ in \eqnok{f_holder1}, and present a unified problem-parameter free accelerated gradient (UPFAG) method. We show that this method under the convexity assumption on $f$,
similar to those in \cite{DeGlNe10-1,Lan13-1,Nest14}, achieves the optimal complexity bound
\beq\label{opt_weak_cvx}
{\cal O} \left(\left[\frac{H}{\epsilon}\right]^{\frac{2}{1+3\nu}}\right),
\eeq
it also exhibits the best-known iteration complexity
\beq\label{opt_weak_nocvx}
{\cal O} \left(\frac{H^{\frac{1}{\nu}}}{\epsilon^{\frac{1+\nu}{2\nu}}}\right).
\eeq
to reduce the squared norm of the projected gradient within $\epsilon$-accuracy for nonconvex optimization. To the best of our knowledge, this is the first time that a uniformly optimal algorithm, which does not require any problem parameter information but only takes the target accuracy and a few user-defined line search parameters as the input,  has been presented for solving smooth, nonsmooth, weakly smooth convex and nonconvex optimization.
Moreover, this algorithm can also exploit a more efficient Quasi-Newton step rather than the gradient descent step
for achieving the same iteration complexity bounds.

Third, by incorporating a gradient descent step into the framework of the bundle-level type methods, namely, the accelerated prox-level (APL) method presented in \cite{Lan13-1}, we propose a unified APL (UAPL) method for solving a class of nonlinear programming defined in \eqnok{NLP}, where $f$ satisfies \eqnok{f_holder1}. We show that this method achieves the complexity bounds in \eqnok{opt_weak_cvx} and \eqnok{opt_weak_nocvx} for both convex and nonconvex optimization implying that it is uniformly optimal for solving the aforementioned class of nonlinear programming. Moreover, we simplify this method and present its fast variant, by incorporating a gradient descent step into the framework of the fast APL method \cite{CheLanOuyZha14}, for solving ball-constrained and unconstrained problems. To the best of our knowledge, this is the first time that the bundle-level type methods are generalized for these nonconvex nonlinear programming problems.

The rest of the paper is organized as follows. In Section~\ref{sec_UAG}, we present the UAG method for solving a class of nonlinear programming problems where the objective function is the summation of a Lipschitz continuously differentiable function and a simple convex function, and establish
the convergence results. We then generalize this method in Section~\ref{sec_hol} for solving a broader class of problems where the Lipschitz
continuously differentiable function in the objective is replaced by a weakly smooth function with H\"{o}lder continuous gradient.
In Section~\ref{sec_bundle}, we provide different variants of the bundle-level type methods for solving the aforementioned class of nonlinear programming. In section~\ref{sec_num}, we show some numerical illustration of implementing the above-mentioned algorithms.
%Note that if $\nu=1$, \eqnok{f_holder1} implies that $f'$ is Lipschitz continuous on $X$.
%However, if $\nu=0$, \eqnok{f_holder1} only implies that $f'$ is bounded on $X$.
%\vgap

{\bf Notation.}
For a differentiable function $h: \bbr^n \to \bbr$, $h'(x)$ is the gradient of $h$ at $x$. More generally, when
$h$ is a proper convex function, $\partial h(x)$ denotes the subdifferential set of $h$ at $x$.
For $x \in \bbr^n$ and $y \in \bbr^n$, $\langle x, y \rangle$ is the standard inner product in $\bbr^n$.
The norm $\|\cdot\|$ is the Euclidean norm given by $\|x\| = \sqrt{ \langle x,x \rangle}$, while
$\|x\|_G = \sqrt{ \langle G x,x \rangle}$ for a positive definite matrix $G$.
Moreover, we let ${\cal B}(\bar x, r)$ to be the ball with radius $r$ centered at $\bar x$ i.e.,
${\cal B}(\bar x, r) =\{x \in \bbr^n \ \ | \ \ \|x-\bar x\| \le r\}$.
We denote $I$ as the identity matrix.
For any real number $r$, $\lceil r \rceil$ and $\lfloor r \rfloor$ denote the nearest integer to $r$ from above and below, respectively.
$\bbr_+$ denotes the set of nonnegative real numbers.

\setcounter{equation}{0}
\section{Unified accelerated gradient method}\label{sec_UAG}

Our goal in this section is to present a unified gradient type method to solve  problem \eqnok{NLP} when $f$ is Lipschitz continuously differentiable.
This method automatically carries the optimal theoretical convergence rate without explicitly knowing $f$ in \eqnok{NLP} is convex or not.
Compared with the optimal accelerated gradient method
presented in \cite{GhaLan15}, our algorithm does not need the uniform boundedness assumption on the iterates generated by
the algorithm. Throughout this section, we assume that the gradient of $f$ in \eqnok{NLP} is  $L$-Lipschitz continuous on $X$, i.e.,
\eqnok{f_smooth} holds, which consequently implies
\beq \label{f_smooth1}
|f(y) - f(x) - \langle f'(x), y - x \rangle | \le \frac{L}{2} \|y - x\|^2
\qquad\forall x, y \in X.
\eeq

The unified accelerated gradient (UAG) algorithm is described as follows.

\begin{algorithm} [H]
	\caption{The unified accelerated gradient (UAG) algorithm}
	\label{alg_UAG}
	\begin{algorithmic}

\STATE Input:
$x_0 \in X$, $\{\alpha_k\}$ s.t. $\alpha_1 = 1$ and $\alpha_k \in (0,1)$ for any $k \ge 2$,
$\{\lambda_k > 0\}$, $\{\beta_k > 0\}$ and $\{\gamma_k > 0\}$.
%\begin{itemize}
%\FOR {$k=1, 2, \ldots$ }
\STATE 0. Set the initial points $x^{ag}_0 = x_0$ and $k=1$.
\STATE 1. Set
\beqa
x^{md}_k &=& (1 - \alpha_k) x^{ag}_{k-1}
+ \alpha_k x_{k-1}.\label{Ne}
\eeqa
\STATE 2. Compute $f'(x^{md}_k)$ and $f'(x^{ag}_{k-1})$, and set
\beqa
x_k &=& \arg\min\limits_{u \in X} \left\{ \langle f'(x^{md}_k), u \rangle
+ \frac{1}{2 \lambda_k} \|u-x_{k-1}\|^2+\cX(u) \right\}, \label{Ne1}\\
\tilde x^{ag}_k &=& (1 - \alpha_k) x^{ag}_{k-1}+ \alpha_k x_k, \label{Ne2}\\
\bar x^{ag}_k &=& \arg\min\limits_{u \in X} \left\{ \langle f'(x^{ag}_{k-1}), u \rangle
+ \frac{1}{2 \beta_k} \|u- x^{ag}_{k-1}\|^2+\cX(u) \right\}.\label{Ne3}
\eeqa
\STATE 3. Choose $x^{ag}_k$ such that
\beq \label{def_x_ag}
\Psi(x^{ag}_k) = \min \{\Psi(\bar x^{ag}_k), \Psi(\tilde x^{ag}_k)\}.
\eeq

\STATE 4. Set $k \leftarrow k+1$ and go to step 1.
	\end{algorithmic}
\end{algorithm}

\vgap

We now add a few remarks about the above UAG algorithm. First, observe that by just considering \eqnok{Ne}, \eqnok{Ne1}, \eqnok{Ne2} and setting $x^{ag}_k = \tilde x^{ag}_k$, Algorithm~\ref{alg_UAG} would reduce to a variant of the well-known Nesterov's optimal gradient method (see, e.g., \cite{Nest04}).
Moreover, if replacing $x^{ag}_{k-1}$ by $x^{md}_k$ in \eqnok{Ne3} and setting $x^{ag}_k = \bar x^{ag}_k$, then \eqnok{Ne}, \eqnok{Ne1} and \eqnok{Ne3} would give the accelerated gradient (AG) method proposed by Ghadimi and Lan \cite{GhaLan13-1}.
However, when $f$ is nonconvex, the convergence of this AG method in \cite{GhaLan13-1} requires the boundedness assumption on the
iterates, as mentioned before. On the other hand, by just considering \eqnok{Ne3} and setting $x^{ag}_k = \bar x^{ag}_k$, the UAG algorithm would be a variant of the projected gradient method \cite{GhaLanZhang14}. Indeed, it follows from these observations that we can possibly perform the convergence analysis of the UAG method for convex and nonconvex optimization
separately (see the discussions after Corollary~\ref{lips_corl}).

Second, relation \eqnok{def_x_ag} guarantees the objective function value at the iterates $x^{ag}_k$ generated by the UAG algorithm is non-increasing.
Such a monotonicity of the objective function value, as shown in the proof of Theorem~\ref{main_theom}.a), is required to establish convergence of the algorithm when $\Psi$ is nonconvex.

Finally, noticing that $\cX$ in problem \eqnok{NLP} is not necessarily differentiable and $f$ in \eqnok{NLP} may not be a convex function, hence
we need to define a termination criterion when the objective function $\Psi$ is not convex.
In this case, we would terminate the algorithm when the norm of the generalized projected gradient defined by
\beq \label{def_proj_grad}
g_{_{X,k}} = \frac{x^{ag}_{k-1} - \bar x^{ag}_k}{\beta_k}
\eeq
is sufficiently small.
Note that $g_{_{X,k}} = f'(x^{ag}_{k-1})$ when $\cX$ vanishes and $X=\bbr^n$. Indeed the above generalized projected gradient in constrained
nonsmooth optimization plays an analogous role to that of the gradient in unconstrained smooth optimization.
 In particular, it can be shown that if $\|g_{_{X,k}}\| \le \epsilon$, then $\Psi'(\bar{x}^{ag}_k)  \in -{\cal N}_X (\bar x^{ag}_k) + {\cal B}(\epsilon (L\beta_k+1))$, where $\Psi'(\bar{x}^{ag}_k) \in \partial \Psi(\bar{x}^{ag}_k)$, ${\cal N}_X (\bar x^{ag}_k)$ is the normal cone of $X$ at $\bar x^{ag}_k$, and ${\cal B}(r) := \{x \in \bbr^n: \|x\| \le r\}$ (see e.g., \cite{GhaLan15}).

To establish the convergence of the above UAG algorithm,
we first need the following simple technical result (see
Lemma 3 of \cite{Lan13-2} for a slightly more general result).

\begin{lemma} \label{Gamma_division}
Let $\{\alpha_k\}$ be a sequence of real numbers such that  $\alpha_1 = 1$ and $\alpha_k \in (0,1)$ for any $k \ge 2$.
If a sequence $\{\omega_k\}$ satisfies
\beq \label{general_cond}
\omega_k \le (1-\alpha_k) \omega_{k-1}+\zeta_k, \ \ k=1,2,\ldots,
\eeq
then for any $k \ge 1$ we have
\[
\omega_k \le \Gamma_k \sum_{i=1}^k ( \zeta_i/\Gamma_i),
\]
where
\beq \label{def_Gamma}
\Gamma_{k} :=
\left\{
\begin{array}{ll}
 1, & k = 1,\\
(1 - \alpha_{k}) \Gamma_{k-1}, & k \ge 2.
\end{array} \right.
\eeq
\end{lemma}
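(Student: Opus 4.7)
The plan is to reduce the inequality to a telescoping form by normalizing with $\Gamma_k$. Since $\Gamma_k = (1-\alpha_k)\Gamma_{k-1}$ for $k \ge 2$, dividing the recursion \eqnok{general_cond} by $\Gamma_k$ should produce
\[
\frac{\omega_k}{\Gamma_k} \le \frac{(1-\alpha_k)\omega_{k-1}}{\Gamma_k} + \frac{\zeta_k}{\Gamma_k} = \frac{\omega_{k-1}}{\Gamma_{k-1}} + \frac{\zeta_k}{\Gamma_k}, \qquad k \ge 2,
\]
so that the coefficient in front of $\omega_{k-1}$ becomes exactly $1/\Gamma_{k-1}$. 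This is the whole point of the definition of $\Gamma_k$: it is designed so that the multiplicative coefficient disappears after rescaling.

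Next I would dispose of the base case $k=1$. Since $\alpha_1 = 1$, the hypothesis gives $\omega_1 \le 0\cdot \omega_0 + \zeta_1 = \zeta_1$, and since $\Gamma_1 = 1$ we get $\omega_1/\Gamma_1 \le \zeta_1/\Gamma_1$, which is the $k=1$ instance of the target inequality. Note that this conveniently removes any dependence on the (unspecified) value $\omega_0$.

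Then I would telescope: summing the rescaled inequalities for indices $i = 2, 3, \ldots, k$ and adding the base case yields
\[
\frac{\omega_k}{\Gamma_k} \le \frac{\omega_1}{\Gamma_1} + \sum_{i=2}^{k} \frac{\zeta_i}{\Gamma_i} \le \sum_{i=1}^{k}\frac{\zeta_i}{\Gamma_i}.
\]
Multiplying by $\Gamma_k > 0$ (positivity follows from $\Gamma_1 = 1$ and $\alpha_k \in (0,1)$ for $k \ge 2$) gives the claim. Formally this is a one-line induction on $k$ with the rescaled recursion as the inductive step.

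I do not anticipate any real obstacle: the proof is essentially a bookkeeping exercise, and the only subtlety is verifying that the definition of $\Gamma_k$ precisely cancels the factor $(1-\alpha_k)$ so that the rescaled sequence $\omega_k/\Gamma_k$ satisfies an additive (rather than multiplicative) recursion amenable to telescoping. Handling the $k=1$ case separately (where $1-\alpha_1 = 0$) is the only minor point requiring care.
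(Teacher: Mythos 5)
Your proof is correct: the base case $k=1$ uses $\alpha_1=1$ to eliminate $\omega_0$, the rescaling by $\Gamma_k>0$ turns the recursion into a telescoping additive one, and the conclusion follows. The paper itself omits the proof, citing Lemma~3 of an earlier work of Lan, where the argument is exactly this induction/telescoping, so your approach matches the intended one.
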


Below, we give the main convergence properties of the UAG algorithm.

\begin{theorem}\label{main_theom}
Let $\{x^{ag}_k\}$ be the iterates generated by Algorithm~\ref{alg_UAG}
and $\Gamma_k$ be defined in \eqnok{def_Gamma}.
\begin{itemize}
\item [a)] Suppose that $\Psi$ is bounded below over $X$, i.e., $\Psi^*$ is finite. If $\{\beta_k\}$ is chosen such that
\beq\label{stepsize_assum0}
\beta_k \le \frac{2}{L}
\eeq
with $\beta_k < 2/L $ for at least one $k$, then for any $N \ge 1$, we have
\beq
\min\limits_{k=1,...,N} \|g_{_{X,k}}\|^2
\le \frac{\Psi(x_0) - \Psi^*}{\sum_{k=1}^N \beta_k \left(1-\frac{L \beta_k}{2}\right)}.\label{main_nocvx}
\eeq

\item [b)] Suppose that $f$ is convex and
an optimal solution $x^*$ exists for problem \eqnok{NLP}.
If $\{\alpha_k\}$, $\{\beta_k\}$ and $\{\lambda_k\}$ are chosen such that
\begin{equation}
\alpha_k \lambda_k \le \frac{1}{L}, \label{stepsize_assum1}
\end{equation}
and
\begin{equation}
\frac{\alpha_1}{\lambda_1 \Gamma_1} \ge \frac{\alpha_2}{\lambda_2 \Gamma_2} \ge \ldots,\label{stepsize_equal}
\end{equation}
then for any $N \ge 1$, we have
\beqa
%\min\limits_{k=1,...,N} \|g_{_{X,k}}\|^2 &\le& \frac{\|x_0 - x^*\|^2}{\lambda_1 \sum_{k=1}^N \Gamma_k^{-1} \beta_k (2-L \beta_k)}, \label{cvx_grad}\\
\Psi(x^{ag}_N)-\Psi(x^*) &\le& \frac{\Gamma_N \|x_0 - x^*\|^2}
{2\lambda_1}. \label{cvx_fun}
\eeqa
\end{itemize}
\end{theorem}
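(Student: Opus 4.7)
For part~(a) the key observation is that $\bar x^{ag}_k$ is a proximal gradient step from $x^{ag}_{k-1}$ with stepsize $\beta_k$ and $g_{_{X,k}}$ is the associated (scaled) step. I would first establish the one-step descent estimate
\[
\Psi(\bar x^{ag}_k) \le \Psi(x^{ag}_{k-1}) - \beta_k\Bigl(1 - \tfrac{L\beta_k}{2}\Bigr)\|g_{_{X,k}}\|^2
\]
by writing the first-order optimality condition for \eqref{Ne3} tested at $u = x^{ag}_{k-1}$, pairing it with the subgradient inequality $\cX(x^{ag}_{k-1}) \ge \cX(\bar x^{ag}_k) + \langle \xi_k, x^{ag}_{k-1}-\bar x^{ag}_k\rangle$ for $\xi_k \in \partial \cX(\bar x^{ag}_k)$, and adding the Lipschitz upper bound on $f(\bar x^{ag}_k) - f(x^{ag}_{k-1})$ coming from \eqref{f_smooth1}. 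Combining this with $\Psi(x^{ag}_k) \le \Psi(\bar x^{ag}_k)$ from \eqref{def_x_ag} and telescoping over $k = 1,\dots,N$ yields $\sum_{k=1}^N \beta_k(1 - L\beta_k/2)\|g_{_{X,k}}\|^2 \le \Psi(x_0) - \Psi^*$, and bounding the minimum by the weighted average gives \eqref{main_nocvx}; the strict inequality $\beta_k < 2/L$ for some $k$ is exactly what is needed to make the denominator positive.

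For part~(b) I would carry out the standard accelerated-gradient style recursion, this time centered on $\tilde x^{ag}_k$. Applying \eqref{f_smooth1} at $x^{md}_k$ together with the identity $\tilde x^{ag}_k - x^{md}_k = \alpha_k(x_k - x_{k-1})$, and then using convexity of $f$ on the combination $x^{md}_k = (1-\alpha_k)x^{ag}_{k-1} + \alpha_k x_{k-1}$, yields
\[
f(\tilde x^{ag}_k) \le (1-\alpha_k)f(x^{ag}_{k-1}) + \alpha_k f(x^*) + \alpha_k\langle f'(x^{md}_k), x_k - x^*\rangle + \tfrac{L\alpha_k^2}{2}\|x_k - x_{k-1}\|^2.
\]
The inner product is controlled by the optimality condition of \eqref{Ne1} at $u = x^*$, which, via the three-point identity for $\|\cdot\|^2$ and the subgradient inequality for $\cX$, gives
\[
\alpha_k\langle f'(x^{md}_k), x_k - x^*\rangle \le \tfrac{\alpha_k}{2\lambda_k}\bigl(\|x_{k-1}-x^*\|^2 - \|x_k-x^*\|^2 - \|x_k-x_{k-1}\|^2\bigr) + \alpha_k\bigl(\cX(x^*) - \cX(x_k)\bigr).
\]
Adding the convex combination estimate $\cX(\tilde x^{ag}_k) \le (1-\alpha_k)\cX(x^{ag}_{k-1}) + \alpha_k\cX(x_k)$, invoking \eqref{stepsize_assum1} to discard the nonpositive remainder in $\|x_k-x_{k-1}\|^2$ (since then $\alpha_k/\lambda_k \ge L\alpha_k^2$), and finally using $\Psi(x^{ag}_k) \le \Psi(\tilde x^{ag}_k)$ from \eqref{def_x_ag}, produces the clean one-step recursion
\[
\Psi(x^{ag}_k) - \Psi(x^*) \le (1-\alpha_k)\bigl(\Psi(x^{ag}_{k-1}) - \Psi(x^*)\bigr) + \tfrac{\alpha_k}{2\lambda_k}\bigl(\|x_{k-1}-x^*\|^2 - \|x_k-x^*\|^2\bigr).
\]

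The remaining task is to apply Lemma~\ref{Gamma_division} with $\omega_k = \Psi(x^{ag}_k) - \Psi(x^*)$, obtaining
\[
\Psi(x^{ag}_N) - \Psi(x^*) \le \Gamma_N \sum_{k=1}^N \tfrac{\alpha_k}{2\lambda_k\Gamma_k}\bigl(\|x_{k-1}-x^*\|^2 - \|x_k-x^*\|^2\bigr),
\]
and then to bound the weighted telescoping sum via summation by parts using the monotonicity assumption \eqref{stepsize_equal}: since $\alpha_k/(\lambda_k\Gamma_k)$ is non-increasing, Abel summation shows the sum is at most $(\alpha_1/(\lambda_1\Gamma_1))\|x_0-x^*\|^2 = \|x_0-x^*\|^2/\lambda_1$, which yields \eqref{cvx_fun}. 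I expect this last summation-by-parts step to be the main technical obstacle: the sum is not a direct telescope because the weights vary with $k$, so one must use \eqref{stepsize_equal} together with $\alpha_1 = \Gamma_1 = 1$ to collapse it. The rest is careful bookkeeping among the prox-step optimality, the descent lemma, and the convexity of $f$ and $\cX$, and the role of the monotonicity requirement \eqref{def_x_ag} is to let the same $x^{ag}_k$ serve both the gradient-descent bound in~(a) and the accelerated bound in~(b).
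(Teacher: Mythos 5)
Your plan is correct and follows essentially the same route as the paper: the one-step descent estimate for part (a) is exactly the paper's inequality \eqref{cnvg_nocvx} (note $\beta_k\|g_{_{X,k}}\|^2 = \|x^{ag}_{k-1}-\bar x^{ag}_k\|^2/\beta_k$), obtained from the prox-step optimality of \eqref{Ne3} plus the descent lemma, and part (b) reproduces the paper's recursion \eqref{cnvg_cvx} followed by Lemma~\ref{Gamma_division} and the Abel-summation bound \eqref{sum_dist}. The only cosmetic difference is that you substitute $x=x^*$ from the outset where the paper carries a generic $x\in X$ until the end.
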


\vgap

\begin{proof}
We first show part a).
By \eqnok{f_smooth1}, we have
\beq \label{f_smooth_p1}
f(\bar x^{ag}_k) \le f(x^{ag}_{k-1}) + \langle f'( x^{ag}_{k-1}) , \bar x^{ag}_k- x^{ag}_{k-1} \rangle +\frac{L}{2} \|\bar x^{ag}_k- x^{ag}_{k-1} \|^2.
\eeq
Then, it follows from  \eqnok{Ne3} and  Lemma 2 of \cite{GhaLan12-2a} that for any $x  \in X$ we have
\beq \label{Ne3_lemma}
\langle f'(x^{ag}_{k-1}), \bar x^{ag}_k - x \rangle + \cX(\bar x^{ag}_k)  \le \cX(x) +
\frac{1}{2\beta_k} \left[\|x^{ag}_{k-1} - x\|^2-\|\bar x^{ag}_k - x\|^2 -\|x^{ag}_{k-1} -\bar x^{ag}_k\|^2 \right].
\eeq
 Letting $x=x^{ag}_{k-1}$ in the above inequality, adding it to \eqnok{f_smooth_p1} and noticing \eqnok{def_x_ag},
we obtain
\beq \label{cnvg_nocvx}
\Psi(x^{ag}_k) \le \Psi(\bar x^{ag}_k)\le \Psi(x^{ag}_{k-1}) -\frac{1}{\beta_k} \left(1-\frac{L \beta_k}{2}\right) \| x^{ag}_{k-1} -\bar x^{ag}_k\|^2.
\eeq
Summing up the above inequalities for $k$ from $1$ to $N$ and rearranging the terms, it follows from the definition
of $g_{_{X,k}}$ in \eqnok{def_proj_grad} and  $\Psi^* \le \Psi(x^{ag}_N)$ that
\beqa
&& \min\limits_{k=1,2,\ldots,N} \|g_{_{X,k}}\|^2 \sum_{k=1}^N \beta_k \left(1-\frac{L \beta_k}{2}\right) \nn \\
& \le & \sum_{k=1}^N \beta_k \left(1-\frac{L \beta_k}{2}\right) \|g_{_{X,k}}\|^2
=  \sum_{k=1}^N \frac{1}{\beta_k} \left(1-\frac{L \beta_k}{2}\right) \|x^{ag}_{k-1} -\bar x^{ag}_k\|^2 \nn \\
&\le& \Psi(x^{ag}_0)-\Psi(x^{ag}_N) \le \Psi(x_0)-\Psi^*.
\eeqa
Then,  \eqnok{stepsize_assum0} and  the above inequality  imply \eqnok{main_nocvx}.

We now show part b). Using \eqnok{Ne2} and the convexity of $f$, for any $x \in X$, we have
\beqa
& &f(x^{md}_k) + \langle f'(x^{md}_k) , \tilde x^{ag}_k-x^{md}_k \rangle \nn \\
&=& (1-\alpha_k)[f(x^{md}_k) + \langle f'(x^{md}_k) , x^{ag}_{k-1}-x^{md}_k \rangle] + \alpha_k [f(x^{md}_k) + \langle f'(x^{md}_k) , x_k-x^{md}_k \rangle] \nn \\
&\le& (1-\alpha_k)f(x^{ag}_{k-1})+\alpha_k f(x) +\alpha_k \langle f'(x^{md}_k) , x_k-x\rangle,
\eeqa
which together with \eqnok{f_smooth1}, \eqnok{Ne}, \eqnok{Ne2}, \eqnok{def_x_ag}, and the convexity of
$\cX$ imply that
\begin{align}
&\Psi (x^{ag}_k) \le \Psi (\tilde x^{ag}_k) =
f(\tilde x^{ag}_k)+\cX(\tilde x^{ag}_k) \le f(x^{md}_k) + \langle f'(x^{md}_k) , \tilde x^{ag}_k-x^{md}_k \rangle +\frac{L}{2} \|\tilde x^{ag}_k-x^{md}_k\|^2+\cX(\tilde x^{ag}_k) \nn \\
&\le (1-\alpha_k)f(x^{ag}_{k-1})+\alpha_k f(x) +\alpha_k \langle f'(x^{md}_k) , x_k-x\rangle
+\frac{L}{2} \|\tilde x^{ag}_k-x^{md}_k\|^2 + (1-\alpha_k) \cX(x^{ag}_{k-1})+\alpha_k \cX(x_k) \nn \\
&= (1-\alpha_k)\Psi(x^{ag}_{k-1})+\alpha_k f(x) +\alpha_k \langle f'(x^{md}_k) , x_k-x\rangle
+\frac{L\alpha_k^2}{2} \|x_k-x_{k-1}\|^2+\alpha_k \cX(x_k).\label{f_smooth_p2}
\end{align}
Now, by \eqnok{Ne1} and Lemma 2 of \cite{GhaLan12-2a}, for any $x  \in X$ we have
\beq \label{Ne1_lemma}
\langle f'(x^{md}_k), x_k - x \rangle + \cX(x_k)  \le \cX(x) +
\frac{1}{2\lambda_k} \left[\|x_{k-1} - x\|^2-\|x_k - x\|^2 -\|x_k -x_{k-1}\|^2 \right].
\eeq
Multiplying the above inequality by $\alpha_k$ and summing it up with \eqnok{f_smooth_p2}, we obtain
\[
\Psi (x^{ag}_k) \le (1-\alpha_k)\Psi(x^{ag}_{k-1})+\alpha_k \Psi(x)+\frac{\alpha_k}{2\lambda_k} \left[\|x_{k-1} - x\|^2-\|x_k - x\|^2\right]
-\frac{\alpha_k(1-L\alpha_k\lambda_k)}{2\lambda_k}\|x_k -x_{k-1}\|^2,
\]
which together with the assumption \eqnok{stepsize_assum1} give
\beq \label{cnvg_cvx}
\Psi (x^{ag}_k) \le (1-\alpha_k)\Psi(x^{ag}_{k-1})+\alpha_k \Psi(x)+\frac{\alpha_k}{2\lambda_k} \left[\|x_{k-1} - x\|^2-\|x_k - x\|^2\right].
\eeq
 Subtracting $\Psi(x)$ from both sides of the above inequality and dividing them by $\Gamma_k$, then it follows
from Lemma~\ref{Gamma_division} that for any $x \in X$ we have
\beq
\frac{\Psi(x^{ag}_N)-\Psi(x)}{\Gamma_N} \le \sum_{k=1}^N \frac{\alpha_k}{2 \lambda_k \Gamma_k} \left[\|x_{k-1}-x\|^2 -\|x_k-x\|^2 \right]. \label{cvx_recur}
\eeq
Now, by \eqnok{stepsize_equal} and $\alpha_1 = \Gamma_1 = 1$, we have
\beq \label{sum_dist}
\sum_{k=1}^N \frac{\alpha_k}{\lambda_k \Gamma_k} \left[\|x_{k-1}-x\|^2 -\|x_k-x\|^2 \right]
\le \frac{\alpha_1 \|x_0 - x\|^2}{\lambda_1 \Gamma_1} = \frac{\|x_0 - x\|^2}{\lambda_1},
\eeq
which together with \eqnok{cvx_recur} give
\[
\frac{\Psi(x^{ag}_N)-\Psi(x)}{\Gamma_N} \le \frac{\|x_0 - x\|^2}{2 \lambda_1}.
\]
Setting  $x=x^*$ in the above inequality directly gives \eqnok{cvx_fun}.
\end{proof}

\vgap

Note that the convergence analysis of the UAG method is completely separable for convex and nonconvex problems
in the above proof.
This allows us to solve the optimization problems uniformly without having any information about the convexity of $f$. In the next corollary we specify one particular set of choices of  the parameters in the UAG algorithm to obtain a particular convergence rate.

\begin{corollary} \label{lips_corl}
Suppose that $\{\alpha_k\}$, $\{\beta_k\}$ and $\{\lambda_k\}$ in Algorithm~\ref{alg_UAG} are set to
\beq \label{def_alpha_lambda_beta0}
\alpha_k = \frac{2}{k+1}, \ \ \ \lambda_k = \frac{k}{2L},  \ \ \ \mbox{and} \ \ \ \beta_k = \frac{1}{L}.
\eeq

\begin{itemize}
\item [a)]  Suppose that $\Psi$ is bounded below over $X$, i.e., $\Psi^*$ is finite. Then for any $N \ge 1$, we have
\beq
\min\limits_{k=1,...,N} \|g_{_{X,k}}\|^2
\le \frac{2L[\Psi(x_0) - \Psi^*]}{N}.\label{main_nocvx1}
\eeq

\item [b)] Suppose that $f$ is convex and
an optimal solution $x^*$ exists for problem \eqnok{NLP}. Then for any $N \ge 1$, we have
\beqa
%\min\limits_{k=1,...,N} \|g_{_{X,k}}\|^2 &\le& \frac{6L\|x_0 - x^*\|^2}{N^3}, \label{cvx_grad1}\\
\Psi(x^{ag}_N)-\Psi(x^*) &\le& \frac{2 L \|x_0 - x^*\|^2}
{N(N+1)}. \label{cvx_fun1}
\eeqa
\end{itemize}

\end{corollary}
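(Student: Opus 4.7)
The proof is essentially a verification that the proposed parameter choices fulfil the hypotheses of Theorem~\ref{main_theom}, followed by plugging the choices into the two bounds \eqref{main_nocvx} and \eqref{cvx_fun}. So my plan is to handle parts (a) and (b) separately, checking assumptions and then evaluating the resulting expressions in closed form.

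For part (a), the only thing to check is the stepsize condition \eqref{stepsize_assum0}: since $\beta_k = 1/L$, we trivially have $\beta_k \le 2/L$ with strict inequality, so Theorem~\ref{main_theom}.a) applies. The denominator on the right-hand side of \eqref{main_nocvx} becomes
\[
\sum_{k=1}^N \beta_k\Bigl(1 - \tfrac{L\beta_k}{2}\Bigr) = \sum_{k=1}^N \frac{1}{L}\cdot\frac{1}{2} = \frac{N}{2L},
\]
and substituting into \eqref{main_nocvx} yields \eqref{main_nocvx1} directly.

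For part (b), I need to verify \eqref{stepsize_assum1} and \eqref{stepsize_equal}, and then also compute $\Gamma_N$ in closed form to make the bound explicit. The check of \eqref{stepsize_assum1} is immediate: $\alpha_k\lambda_k = \frac{2}{k+1}\cdot\frac{k}{2L} = \frac{k}{L(k+1)} \le \frac{1}{L}$. To check \eqref{stepsize_equal}, I would first unroll the recursion \eqref{def_Gamma} with $\alpha_k = 2/(k+1)$ via a telescoping product,
\[
\Gamma_k = \prod_{i=2}^{k}(1-\alpha_i) = \prod_{i=2}^{k}\frac{i-1}{i+1} = \frac{2}{k(k+1)},
\]
and then compute
\[
\frac{\alpha_k}{\lambda_k\Gamma_k} = \frac{2/(k+1)}{\bigl(k/(2L)\bigr)\cdot\bigl(2/(k(k+1))\bigr)} = 2L,
\]
which is constant, hence in particular non-increasing, so \eqref{stepsize_equal} holds. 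Plugging $\Gamma_N = 2/(N(N+1))$ and $\lambda_1 = 1/(2L)$ into \eqref{cvx_fun} gives exactly \eqref{cvx_fun1}.

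There is no genuine obstacle here; the one spot that requires a modest amount of care is the closed-form evaluation of $\Gamma_k$ and the verification that the ratio $\alpha_k/(\lambda_k\Gamma_k)$ is monotone, which is what makes the parameter triple $(\alpha_k,\lambda_k,\beta_k)$ in \eqref{def_alpha_lambda_beta0} the natural ``balanced'' choice. Once this identity is in hand, the two bounds \eqref{main_nocvx1} and \eqref{cvx_fun1} fall out of Theorem~\ref{main_theom} by direct substitution, and the proof is complete.
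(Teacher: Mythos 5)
Your proposal is correct and follows essentially the same route as the paper's own proof: verify the stepsize conditions \eqref{stepsize_assum0}, \eqref{stepsize_assum1}, \eqref{stepsize_equal}, compute $\sum_{k=1}^N \beta_k(1-L\beta_k/2)=N/(2L)$ and $\Gamma_N = 2/(N(N+1))$, and substitute into \eqref{main_nocvx} and \eqref{cvx_fun}. The only difference is cosmetic: you make the telescoping product for $\Gamma_k$ explicit, whereas the paper states the closed form directly.
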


\begin{proof}
We first show part a). Observe that by \eqnok{def_alpha_lambda_beta0} condition \eqnok{stepsize_assum0} holds.
%Hence, we have  \eqnok{main_nocvx} holds.
Then, \eqnok{main_nocvx1} directly follows from \eqnok{main_nocvx}
and
\[
\sum_{k=1}^N \beta_k \left(1-\frac{L \beta_k}{2}\right)= \frac{N}{2L}.
\]

We now show part b). Observe that by \eqnok{def_alpha_lambda_beta0}, we have
\begin{align}
\alpha_k \lambda_k &= \frac{k}{(k+1)L} <= \frac{1}{L} \quad \mbox{ and } \nn  \\
\frac{\alpha_1}{\lambda_1 \Gamma_1} &= \frac{\alpha_2}{\lambda_2 \Gamma_2} = \ldots =2 L,\nn
\end{align}
which imply that conditions \eqnok{stepsize_assum1} and \eqnok{stepsize_equal} hold.
%Hence, \eqnok{cvx_fun} holds.
On the other hand, by \eqnok{def_Gamma} and \eqnok{def_alpha_lambda_beta0},
we have
\[
\Gamma_N = \frac{2}{N(N+1)},
\]
which together with \eqnok{cvx_fun} clearly imply \eqnok{cvx_fun1}.
%Moreover, we have
%\beq \label{check_bnd1}
%\sum_{k=1}^N \Gamma_k^{-1} \beta_k  (2 - L \beta_k) = \frac{1}{L}  \sum_{k=1}^N \Gamma_k^{-1}
%\ge \frac{1}{2 L} \sum_{k=1}^N k^2 = \frac{1}{12 L}N(N+1)(2N+1) \ge \frac{N^3}{6 L}.
%\eeq
\end{proof}

\vgap

We now add a few remarks about the results obtained in Corollary~\ref{lips_corl}. First, note that the UAG method achieves the best known convergence rate for solving nonconvex optimization problems as well as convex optimization problems.
Specifically, \eqnok{main_nocvx1} implies that to achieve $\|g_{_{X,k}}\|^2 \le \epsilon$ for at least one $k$, the total number of iterations needed by the UAG method is bounded by
\beq \label{main_nocvx2}
{\cal O} \left(\frac{L[\Psi(x_0) - \Psi^*]}{\epsilon}\right).
\eeq
This bound is also known for the steepest descent method for unconstrained problems \cite{Nest04}, and the projected gradient method for composite problems in Ghadimi, Lan and Zhang \cite{GhaLanZhang14}.
A similar bound is also obtained by the AG method \cite{GhaLan13-1}, which however, for composite problems, relies on an additional assumption
that the iterates are bounded as mentioned before. On the other hand, one possible advantage of this AG method in \cite{GhaLan13-1} exists in that it can separate the affects of the Lipschitz constants of smooth convex terms.
When $f$ is convex, by \eqnok{cvx_fun1}, the UAG method guarantees to find a solution $\bar x$ such that
 $\Psi(\bar x) - \Psi(x^*) \le \epsilon$ in at most
\beq \label{cvx_fun2}
{\cal O} \left(\sqrt{\frac{L}{\epsilon}}\|x_0 - x^*\|\right)
\eeq
iterations, which is known to be optimal for solving convex optimization problems \cite{nemyud:83}.

Second, the UAG method does not need to know the convexity of the objective function as a prior knowledge.
Instead, it treats both the convex and nonconvex optimization problems in a unified way.
In any case, the UAG method always achieves the complexity bound in \eqnok{main_nocvx2}.
And when the objective function happens to be convex, it would also achieve the optimal complexity bound in \eqnok{cvx_fun2}.
%To the best of our knowledge, this is the first time that such a uniform algorithm is proposed for solving both convex and nonconvex optimization problems.

Despite the above mentioned theoretical advantages for the UAG method, there are still some practical drawbacks of this
method. One obvious drawback of the UAG method is that the parameter policy in \eqnok{def_alpha_lambda_beta0} requires the knowledge of the Lipschitz constant $L$ which may not be exactly known in practice. And a poor estimate of this Lipschitz constant
may severely deteriorate the performance of the method \cite{NJLS09-1}. On the other hand, in many applications the Lipschitz continuity
of the gradient of $f$ in \eqnok{NLP} may not be known either. And in fact, the gradient of $f$ may be only h\"{o}lder continuous instead of Lipschitz continuous. Furthermore, we can see from the convergence analysis of the case when $f$ is not a convex function that the UAG method will perform more like the steepest descent method. However,
the steepest descent method, although very robust, is usually not an efficient method as verified in many applications of nonlinear programming.
In the next section, we would like to modify the UAG method to a more practical method so that it could
be applied to a broader class of problems and be more flexible to take advantage of some already well-studied efficient methods in nonlinear programming. In addition,  no prior knowledge of problem parameters is needed.

\setcounter{equation}{0}
\section{Unified problem-parameter free accelerated gradient method}\label{sec_hol}

%Our goal in this section is to generalize the UAG method such that it does not require the knowledge of problem parameters in
%advance and it is applicable to a broader class of problems.
In this section, we consider a broader class of problems including smooth, weakly smooth and nonsmooth objective functions. In particular, we would
like to deal
with the class of problems in \eqnok{NLP} such that the gradient of $f$ is H\"{o}lder continuous in the sense of \eqnok{f_holder1}, which also implies
\beq \label{f_holder2}
|f(y) - f(x) - \langle f'(x), y - x \rangle |
\le \frac{H}{1+\nu} \|y - x\|^{1+\nu} \ \ \
\mbox{ for any } x, y  \in X.
\eeq
Our algorithm is stated below as Algorithm~\ref{alg_UPFAG} which involves two line search procedures. For this algorithm, we have the following remarks.

First, note that in steps 1 and 2, we implement two independent line search procedures, respectively, in \eqnok{line_search1} and \eqnok{line_search2}. Indeed, we start with initial choices of stepsizes $\hat{\lambda}_k$ and $\hat{\beta}_k$, and then perform Armijo type of line searches such that certain specially designed line search conditions are satisfied.
We will show in Theorem~\ref{main_theom_hol}.a) that
the two line search procedures will finish in finite number of inner iterations. One simple choice of line search in practice is to set
$G_k = I$ all $k \ge 1$, and set the initial stepsizes to be some Barzilai-Borwein type stepsizes  such as
\beq\label{initial-stepsize}
\hat{\lambda}_k = \max \left\{ \frac{\langle s^{md}_{k-1}, y^{md}_{k-1} \rangle}{\langle y^{md}_{k-1}, y^{md}_{k-1} \rangle}, \sigma  \right\}  \mbox{ for } k \ge 1, \quad \mbox{and} \quad
\hat{\beta}_k = \max \left\{ \frac{\langle s^{ag}_{k-1}, y^{ag}_{k-1} \rangle}{\langle y^{ag}_{k-1}, y^{ag}_{k-1} \rangle}, \sigma  \right\} \mbox{ for } k > 1,
\eeq
 where $s^{md}_{k-1} = x^{md}_k - x^{ag}_{k-1}$, $y^{md}_{k-1} = f'(x^{md}_k) - f'(x^{ag}_{k-1})$,  $s^{ag}_{k-1} = x^{ag}_{k-1} - x^{ag}_{k-2}$ and
$y^{ag}_{k-1} = f'(x^{ag}_{k-1}) - f'(x^{ag}_{k-2})$. And we can choose
$\hat{\beta}_1=1/\hat H$, where $\hat H$ is an estimation of  the H\"{o}lder continuity constant in \eqnok{f_holder1}.

Second, we can include some curvature information of $f$ into a positive definite matrix $G_k$ in \eqnok{subproblem2}
to have better local approximation of the function $\Psi$ at $x^{ag}_{k-1}$. In this case, unit initial stepsize is often preferred,
that is to set $\hat{\beta}_k = 1$ for all $k \ge 1$.
In practice, we can set $G_k$ to be some Quasi-Newton matrix, e.g., the well-known BFGS or limited memory BFGS matrix (see e.g., \cite{ByNoSc94, Noc80, NocWri99}).
%often used in nonlinear programming.
Then, when $X = \bbr^n $ and $\cX(x) \equiv 0$, \eqnok{subproblem2} will be exactly
a Quasi-Newton step and hence, a fast local convergence rate could be expected in practice.
When  $X \ne \bbr^n $ or $\cX(x) \not\equiv 0$, we may not have closed formula for the solution of the subproblem
\eqnok{subproblem2}. Then, the alternating direction method of multipliers or primal-dual type algorithms could solve the subproblem
\eqnok{subproblem2} quite efficiently, since its objective function is just a composition of  a simple convex function  $\cX$
and a convex quadratic function with known inverse of the Hessian. So, in general, by different choices of the matrix $G_k$,
many well-studied efficient methods in nonlinear programming could be incorporated into the algorithm.

\begin{algorithm} [H]
	\caption{The unified problem-parameter free accelerated gradient (UPFAG) algorithm}
	\label{alg_UPFAG}
	\begin{algorithmic}

\STATE Input:
$x_0 \in X$, line search parameters $0 < \gamma < \sigma < 1, \gamma_1, \gamma_2 \in (0,1)$, and
accuracy parameter $\delta>0$.
\STATE 0. Set the initial points $x^{ag}_0 = x_0$ and $k=1$.
\STATE 1. Choose initial stepsize $\hat{\lambda}_k >0$ and  find the smallest integer $\tau_{1,k} \ge 0$ such that with
\beq \label{def_alpha_lambda}
\eta_k = \hat{\lambda}_k \gamma_1^{\tau_{1,k}} \quad \mbox{ and } \quad \lambda_k = (\eta_k + \sqrt{\eta_k^2 + 4 \eta_k \Lambda_{k-1}})/2,
\eeq
%\beq \label{def_alpha_lambda}
%\lambda_k \alpha_k = \hat{\lambda}_k \gamma_1^{\tau_{1,k}}, \ \ \ \mbox{where} \ \ \ \alpha_k=\frac{\lambda_k}{\sum_{i=1}^k %\lambda_i}, \ \ \ \forall k \ge 1,
%\eeq
the solutions obtained by \eqnok{Ne}, \eqnok{Ne1} and \eqnok{Ne2} satisfy
\beq \label{line_search1}
f(\tilde x^{ag}_k) \le f(x^{md}_k) + \alpha_k \langle f'(x^{md}_k), x_k -x_{k-1} \rangle+\frac{ \alpha_k}{2 \lambda_k}
 \|x_k -x_{k-1}\|^2+\delta \alpha_k,
\eeq
where
\beq \label{def_alpha}
\alpha_k = \lambda_k/\Lambda_k \ \ \mbox{and} \ \ \Lambda_k = \sum_{i=1}^k \lambda_k.
\eeq
\STATE 2.   Choose initial stepsize $\hat{\beta}_k >0$ and find the smallest integer $\tau_{2,k} \ge 0$ such that with
\beq \label{def_beta}
\beta_k = \hat{\beta}_k \gamma_2^{\tau_{2,k}},
\eeq
we have
\beq \label{line_search2}
\Psi(\bar x^{ag}_k) \le \Psi(x^{ag}_{k-1}) - \frac{\gamma}{2\beta_k}  \|\bar x^{ag}_k- x^{ag}_{k-1}\|^2+\frac{1}{k},
\eeq
where
\beq\label{subproblem2}
\bar x^{ag}_k = \arg\min\limits_{u \in X} \left\{ \langle f'(x^{ag}_{k-1}), u \rangle +
  \frac{1}{2 \beta_k} \|u- x^{ag}_{k-1}\|_{G_k}^2
+\cX(u) \right\},
\eeq
and $G_k \succeq \sigma I$ for some $\sigma \in (0,1)$.
\STATE 3. Choose $x^{ag}_k$ such that
\beq
\Psi(x^{ag}_k) = \min \{\Psi(x^{ag}_{k-1}), \Psi(\bar x^{ag}_k), \Psi(\tilde x^{ag}_k)\}. \label{def_x_ag_ls}
\eeq

\STATE 4. Set $k \leftarrow k+1$ and go to step 1.
	\end{algorithmic}
\end{algorithm}

\vgap

Third, from the complexity point of view, instead of setting the initial stepsizes given in \eqnok{initial-stepsize},
we could also take advantage of the line search in the previous iteration and set
\beq \label{def_alpha_lambda_beta}
\hat{\lambda}_k = \eta_{k-1} \quad \mbox{ and } \quad \hat{\beta}_k = \beta_{k-1},
\eeq
where $\eta_{k-1}$ and $\beta_{k-1}$ are the accepted stepsizes in the previous $k-1$-th iteration.
The choice of initial stepsizes in \eqnok{initial-stepsize} is more aggressive and inherits some quasi-Newton information,
and hence, could perform better in practice. However, the strategies in
\eqnok{def_alpha_lambda_beta} would have theoretical advantages in the total number of inner iterations needed in the line search
(see the discussion after Corollary~\ref{main_corl_hol}). Furthermore, notice that the choice of $\alpha_k$ can be different than the
one in \eqnok{def_alpha}. In fact, we only need to choose $\alpha_k$ such that condition \eqnok{stepsize_assum1} is satisfied.
For simplicity, we use the choice of $\alpha_k$ in \eqnok{line_search1}, which would satisfy the condition $\lambda_1 \alpha_k = \lambda_k \Gamma_k$
due to the definition of $\Gamma_k$ in \eqnok{def_Gamma}.
We can easily see that by this choice of $\alpha_k$ we always have $\alpha_1=1$ and $\alpha_k \in (0,1)$ for all $k \ge 2$.

Finally, \eqnok{def_x_ag_ls} has one more extra term $\Psi(x^{ag}_{k-1})$ in the minimum argument than \eqnok{def_x_ag}.
This extra term is designed to guarantee that $\Psi(x^{ag}_k)$ is monotonically non-increasing. Note that since we assume the stepsizes in Algorithm~\ref{alg_UAG} are set in advance, we do not need such an extra term.
In particular, $\Psi(x^{ag}_k)$ in Algorithm~\ref{alg_UAG} is non-increasing due to \eqnok{cnvg_nocvx}.

Below, we present the main convergence properties of the UPFAG algorithm.

\begin{theorem}\label{main_theom_hol}
Let $\{x^{ag}_k\}$ be the iterates generated by Algorithm~\ref{alg_UPFAG} and $\Gamma_k$ be defined in \eqnok{def_Gamma}.
\begin{itemize}
\item [a)] The line search procedures in Step 1 and Step 2 of the algorithm will finish in finite number of inner iterations.

\item [b)]  Suppose that $f$ is bounded below over $X$, i.e., $\Psi^*$ is finite. Then, for any $N \ge 1$, we have
\beq
\min\limits_{k=1,...,N} \|\hat{g}_{_{X,k}}\|^2
\le \frac{2\left[\Psi(x_0) - \Psi^*+\sum_{k=\lfloor N/2 \rfloor+1}^N k^{-1}\right]}{\gamma \sum_{k=\lfloor N/2 \rfloor+1}^N \beta_k},\label{main_nocvx_hol}
\eeq
where $\hat{g}_{_{X,k}} = (x^{ag}_{k-1} - \bar x^{ag}_k)/\beta_k$ and $\bar x^{ag}_k$ is the solution of \eqnok{subproblem2}.
\item [c)] Suppose that $\Psi$ is convex and an optimal solution $x^*$ exists for problem \eqnok{NLP}. Then for any $N \ge 1$, we have
\beqa
%\min\limits_{k=1,...,N} \|g_{_{X,k}}\|^2 &\le& \frac{\lambda_1^{-1}\|x_0 - x^*\|^2+4\sum_{k=1}^N \Gamma_k^{-1}\delta_k}{ \sum_{k=1}^N \Gamma_k^{-1} \alpha_k \lambda_k}, \label{cvx_grad}\\
\Psi(x^{ag}_N)-\Psi(x^*) &\le& \frac{\Gamma_N\|x_0 - x^*\|^2}
{2\lambda_1}+\delta. \label{cvx_fun_hol}
\eeqa
\end{itemize}

\end{theorem}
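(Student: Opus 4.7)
The plan is to prove the three parts in sequence, with the Hölder bound \eqnok{f_holder2} serving as the main analytic tool; I expect the most delicate step to be the finite termination of line search 2 in part (a), whose residual must be driven below the tolerance $1/k$ by shrinking $\beta_k$ alone.

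For part (a), I would establish termination of each line search separately. In line search 1, \eqnok{Ne} and \eqnok{Ne2} give $\tilde x^{ag}_k - x^{md}_k = \alpha_k(x_k - x_{k-1})$, so \eqnok{f_holder2} yields
\begin{equation*}
f(\tilde x^{ag}_k) \le f(x^{md}_k) + \alpha_k \langle f'(x^{md}_k), x_k - x_{k-1}\rangle + \frac{H\alpha_k^{1+\nu}}{1+\nu}\|x_k-x_{k-1}\|^{1+\nu}.
\end{equation*}
Comparing with \eqnok{line_search1}, termination reduces to dominating $\frac{H\alpha_k^{\nu}}{1+\nu}\|x_k-x_{k-1}\|^{1+\nu}$ by $\frac{1}{2\lambda_k}\|x_k-x_{k-1}\|^2 + \delta$, which I would handle via Young's inequality with conjugate exponents $2/(1+\nu)$ and $2/(1-\nu)$ when $\nu<1$ (the case $\nu = 1$ is immediate); the resulting sufficient condition has the form $\alpha_k^{2\nu/(1-\nu)}\lambda_k \le C(H,\nu,\delta)$. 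Since \eqnok{def_alpha_lambda} implies $\lambda_k = \sqrt{\eta_k\Lambda_k}$ and $\alpha_k = \sqrt{\eta_k/\Lambda_k}$, both quantities vanish as $\eta_k = \hat\lambda_k\gamma_1^{\tau_{1,k}}\to 0$, so the condition is met for all sufficiently large $\tau_{1,k}$. For line search 2, I apply Lemma 2 of \cite{GhaLan12-2a} to the subproblem \eqnok{subproblem2} with $u = x^{ag}_{k-1}$ and use $G_k \succeq \sigma I$ to obtain
\begin{equation*}
\langle f'(x^{ag}_{k-1}), \bar x^{ag}_k - x^{ag}_{k-1}\rangle + \cX(\bar x^{ag}_k) - \cX(x^{ag}_{k-1}) \le -\frac{\sigma}{\beta_k}\|\bar x^{ag}_k - x^{ag}_{k-1}\|^2;
\end{equation*}
combining this with \eqnok{f_holder2} and writing $t_k := \|\bar x^{ag}_k - x^{ag}_{k-1}\|$ reduces \eqnok{line_search2} to the inequality $\frac{\gamma-2\sigma}{2\beta_k}t_k^2 + \frac{H}{1+\nu}t_k^{1+\nu} \le 1/k$. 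Since $2\sigma - \gamma > 0$, the left-hand side is bounded above (as a function of $t_k \ge 0$) by a quantity of order $\beta_k^{(1+\nu)/(1-\nu)}$ for $\nu < 1$, which drops below $1/k$ after finitely many reductions $\beta_k \leftarrow \gamma_2\beta_k$; the case $\nu = 1$ is again immediate because the coefficient of $t_k^2$ becomes negative once $\beta_k$ is small enough.

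For part (b), I combine the accepted inequality \eqnok{line_search2} with the monotonicity rule \eqnok{def_x_ag_ls} to get the descent estimate $\Psi(x^{ag}_k) \le \Psi(x^{ag}_{k-1}) - \frac{\gamma\beta_k}{2}\|\hat g_{_{X,k}}\|^2 + 1/k$. Summing for $k \in \{\lfloor N/2\rfloor+1,\ldots,N\}$ telescopes the function values; using monotonicity once more to bound $\Psi(x^{ag}_{\lfloor N/2\rfloor}) \le \Psi(x_0)$ and $\Psi(x^{ag}_N) \ge \Psi^*$ yields
\begin{equation*}
\sum_{k=\lfloor N/2\rfloor+1}^N \frac{\gamma\beta_k}{2}\|\hat g_{_{X,k}}\|^2 \le \Psi(x_0) - \Psi^* + \sum_{k=\lfloor N/2\rfloor+1}^N \frac{1}{k}.
\end{equation*}
Dividing by $\sum_{k=\lfloor N/2\rfloor+1}^N \frac{\gamma\beta_k}{2}$ and extracting the minimum on the left gives \eqnok{main_nocvx_hol}.

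For part (c), I mimic the proof of Theorem~\ref{main_theom}(b), replacing \eqnok{f_smooth1} by the accepted condition \eqnok{line_search1}. Using convexity of $f$ to upper-bound $f(x^{md}_k) + \alpha_k\langle f'(x^{md}_k), x_k - x_{k-1}\rangle$, combining with convexity of $\cX$ via \eqnok{Ne2}, and invoking the three-point identity for $x_k$ (Lemma 2 of \cite{GhaLan12-2a}) produces, in analogy with \eqnok{cnvg_cvx},
\begin{equation*}
\Psi(x^{ag}_k) \le (1-\alpha_k)\Psi(x^{ag}_{k-1}) + \alpha_k\Psi(x) + \frac{\alpha_k}{2\lambda_k}[\|x_{k-1}-x\|^2 - \|x_k-x\|^2] + \delta\alpha_k,
\end{equation*}
the $\frac{\alpha_k}{2\lambda_k}\|x_k-x_{k-1}\|^2$ contribution cancelling exactly thanks to the specific form of \eqnok{line_search1}. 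Dividing by $\Gamma_k$ and applying Lemma~\ref{Gamma_division}, it remains to establish the algebraic identities $\Gamma_k = \lambda_1/\Lambda_k$, $\alpha_k/(\lambda_k\Gamma_k) \equiv 1/\lambda_1$, and $\sum_{k=1}^N \alpha_k/\Gamma_k = 1/\Gamma_N$, all of which follow by induction from \eqnok{def_alpha_lambda}, \eqnok{def_alpha} and \eqnok{def_Gamma} together with the telescoping relation $\Lambda_{k-1} = (1-\alpha_k)\Lambda_k$. These identities collapse the distance sum to $\|x_0-x\|^2/(2\lambda_1)$ and the $\delta$-sum to $\delta/\Gamma_N$; multiplying through by $\Gamma_N$ and taking $x = x^*$ yields \eqnok{cvx_fun_hol}.
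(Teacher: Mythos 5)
Your proposal is correct and follows essentially the same route as the paper: parts (b) and (c) reproduce the paper's argument step for step, and in part (a) you reduce each line-search condition to a stepsize inequality via \eqnok{f_holder2} and the three-point lemma exactly as the paper does (your direct maximization of the residual over $t_k$ in line search 2 is just the optimized form of the paper's Young-inequality splitting \eqnok{holder_inq}). The only slip is cosmetic: the sufficient condition for line search 1 should read $\alpha_k^{2\nu/(1+\nu)}\lambda_k \le C(H,\nu,\delta)$, equivalently $\alpha_k^{2\nu/(1-\nu)}\lambda_k^{(1+\nu)/(1-\nu)} \le C'$, rather than $\alpha_k^{2\nu/(1-\nu)}\lambda_k \le C$; your conclusion that the condition is eventually met, because $\eta_k=\alpha_k\lambda_k\to 0$ forces both $\alpha_k^{2\nu/(1+\nu)}$ and $\lambda_k$ to shrink, is unaffected.
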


\vgap

\begin{proof}
We first show part a). By \eqnok{f_holder2}, we have
\beq \label{f_holder_p1}
f(\bar x^{ag}_k) \le f(x^{ag}_{k-1}) + \langle f'(x^{ag}_{k-1}) , \bar x^{ag}_k- x^{ag}_{k-1} \rangle +\frac{H}{1+\nu} \|\bar x^{ag}_k- x^{ag}_{k-1} \|^{1+\nu}.
\eeq
Analogous to \eqnok{Ne3_lemma}, by \eqnok{line_search2} and \eqnok{subproblem2}, we can show
\[
\langle f'(x^{ag}_{k-1}), \bar x^{ag}_k - x \rangle + \cX(\bar x^{ag}_k)  \le \cX(x) +
\frac{1}{2\beta_k} \left[\|x^{ag}_{k-1} - x\|_{G_k}^2-\|\bar x^{ag}_k - x\|_{G_k}^2 -\|x^{ag}_{k-1} -\bar x^{ag}_k\|_{G_k}^2 \right].
\]
Letting $u=x^{ag}_{k-1}$ in the above inequality and summing it up with \eqnok{f_holder_p1}, we have
\beq \label{cnvg_nocvx_hol}
\Psi(\bar x^{ag}_k) \le \Psi(x^{ag}_{k-1}) - \left(\frac{\|\bar x^{ag}_k- x^{ag}_{k-1} \|_{G_k}^2}{\beta_k}-\frac{H\|\bar x^{ag}_k- x^{ag}_{k-1} \|^{1+\nu}}{1+\nu} \right).
\eeq
Now, for $\nu \in [0,1)$, it follows from the inequality $ab \le a^p/p+b^q/q$ with $p=\frac{2}{1+\nu}$, $q =  \frac{2}{1-\nu} $, and
\[
a = \frac{H}{1+\nu} \left[\frac{(1-\nu)k}{2}\right]^{\frac{1-\nu}{2}}\|x^{ag}_k-\bar x^{ag}_k\|^{1+\nu} \quad \mbox{ and } \quad
b = \left[\frac{2}{(1-\nu)k}\right]^{\frac{1-\nu}{2}}
\]
that
\beq \label{holder_inq}
\frac{H}{1+\nu} \|x^{ag}_k-\bar x^{ag}_k\|^{1+\nu}
=  a b \le L(\nu,H)k^{\frac{1-\nu}{1+\nu}} \|x^{ag}_k-\bar x^{ag}_k\|^2 +\frac{1}{k},
\eeq
where
\beq \label{def_L}
L(\nu,H) = \left\{\frac{H}{2\left[\frac{(1+\nu)}{1-\nu}\right]^{\frac{1-\nu}{2}}}\right\}^{\frac{2}{1+\nu}}.
\eeq
Let us define
\beq\label{L1H}
L(1,H) = \lim_{\nu \to 1} L(\nu,H) = \frac{H}{2}.
\eeq
Then, \eqnok{holder_inq} holds for all $\nu \in [0,1]$.
Combining \eqnok{cnvg_nocvx_hol} and \eqnok{holder_inq}, we have from $G_k \succeq \sigma I$ that
\beq \label{main_rec_hold}
\Psi(\bar x^{ag}_k) \le \Psi(x^{ag}_{k-1}) - \frac{\sigma- L(\nu,H) k^{\frac{1-\nu}{1+\nu}} \beta_k}{\beta_k} \|\bar x^{ag}_k- x^{ag}_{k-1}\|^2
+ \frac{1}{k}.
\eeq
Also, by \eqnok{f_holder2}, \eqnok{Ne}, and \eqnok{Ne2}, we have
\beqa
f(\tilde x^{ag}_k) &\le& f(x^{md}_k) + \langle f'(x^{md}_k) , \tilde x^{ag}_k- x^{md}_k \rangle +\frac{H}{1+\nu} \|\tilde x^{ag}_k- x^{md}_k \|^{1+\nu} \nn \\
&=&f(x^{md}_k) + \alpha_k \langle f'(x^{md}_k) , x_k- x_{k-1} \rangle +\frac{H \alpha_k^{1+\nu}}{1+\nu} \|x_k- x_{k-1}\|^{1+\nu}\nn \\
&=&f(x^{md}_k) + \alpha_k \langle f'(x^{md}_k) , x_k- x_{k-1} \rangle
-\alpha_k \left(\frac{\|x_k- x_{k-1}\|^2}{2\lambda_k}-\frac{H \alpha_k^{\nu}}{1+\nu} \|x_k- x_{k-1}\|^{1+\nu}\right)\nn \\
&& +\frac{\alpha_k}{2\lambda_k} \|x_k- x_{k-1}\|^2 \nn \\
&\le&f(x^{md}_k) + \alpha_k \langle f'(x^{md}_k) , x_k- x_{k-1} \rangle
-\frac{\alpha_k\left(1-2L(\nu,H)\alpha_k^{\frac{2\nu}{1+\nu}}\lambda_k \delta^{\frac{\nu-1}{1+\nu}}\right)\|x_k- x_{k-1}\|^2}{2\lambda_k}\nn \\
&& + \frac{\alpha_k}{2\lambda_k}\|x_k- x_{k-1}\|^2+\delta \alpha_k,\label{main_rec_hold2}
\eeqa
where the last inequality is obtained similar to \eqnok{holder_inq} and $L(\nu,H)$ is defined in \eqnok{def_L} and
\eqnok{L1H}.

Now, observe that if
\beq \label{line_search_cond1}
\alpha_k^{\frac{2\nu}{1+\nu}} \lambda_k \le \frac{\delta^{\frac{1-\nu}{1+\nu}}}{2 L(\nu,H)} \quad \mbox{ and } \quad
\beta_k \le \frac{(2\sigma - \gamma) k^{\frac{\nu-1}{1+\nu}}}{2 L(\nu,H)},
\eeq
then \eqnok{main_rec_hold} and \eqnok{main_rec_hold2}, respectively, imply
\eqnok{line_search2} and \eqnok{line_search1}.
By \eqnok{def_alpha_lambda} and our setting of $\alpha_k = \lambda_k/\Lambda_k = \lambda_k/(\lambda_k + \Lambda_{k-1})$, we have $\alpha_k \lambda_k = \eta_k$. Hence,
\beq\label{alphabeta}
\alpha_k^{\frac{2\nu}{1+\nu}} \lambda_k = (\alpha_k \lambda_k )^{\frac{2\nu}{1+\nu}} \lambda_k^{\frac{1-\nu}{1+\nu}}
= \eta_k^{\frac{2\nu}{1+\nu}} \lambda_k^{\frac{1-\nu}{1+\nu}}.
\eeq
By \eqnok{def_alpha_lambda}, \eqnok{def_beta} and $\gamma_1, \gamma_2 \in (0,1)$, we have
\[
\lim_{\tau_{1,k} \to \infty} \eta_k = 0, \quad \lim_{\eta_k  \to 0} \lambda_k = 0 \quad \mbox{and}
\quad \lim_{\tau_{2,k} \to \infty} \beta_k = 0,
\]
for any fixed $k$, which together with \eqnok{alphabeta}
 imply \eqnok{line_search_cond1} will be finally satisfied in the line search procedure and therefore, \eqnok{line_search2}
and \eqnok{line_search1} will essentially be satisfied. So the line search procedures in
Step 1 and Step 2  of Algorithm~\ref{alg_UPFAG} are well-defined and finite.

We now show part b). Noting \eqnok{line_search2}, \eqnok{def_x_ag_ls}, and in view of \eqnok{def_proj_grad}, we have
\[
\Psi(x^{ag}_k) \le \Psi(\bar x^{ag}_k) \le \Psi(x^{ag}_{k-1}) - \frac{ \gamma \|\bar x^{ag}_k- x^{ag}_{k-1}\|^2}{2\beta_k}
+\frac{1}{k} = \Psi(x^{ag}_{k-1}) - \frac{\gamma \beta_k}{2} \|\hat{g}_{_{X,k}}\|^2
+\frac{1}{k}.
\]
Summing up the above inequalities for $k$ from $\lfloor N/2 \rfloor+1$ to $N$ and re-arranging the terms, we obtain
\begin{align}
&\min\limits_{k=1,2,\ldots,N} \|\hat{g}_{_{X,k}}\|^2 \sum_{k=\lfloor N/2 \rfloor+1}^N \frac{\gamma \beta_k}{2}
\le \min\limits_{k=\lfloor N/2 \rfloor+1,2,\ldots,N} \|\hat{g}_{_{X,k}}\|^2 \sum_{k=\lfloor N/2 \rfloor+1}^N \frac{\gamma \beta_k}{2}
\le \sum_{k=\lfloor N/2 \rfloor+1}^N \frac{ \gamma \beta_k}{2}\|\hat{g}_{_{X,k}}\|^2 \nn \\
&\le \Psi(x^{ag}_{\lfloor N/2 \rfloor}) -\Psi(x^{ag}_N)+\sum_{k=\lfloor N/2 \rfloor+1}^N \frac{1}{k} \le \Psi(x_0) -\Psi(x^*)+\sum_{k=\lfloor N/2 \rfloor+1}^N \frac{1}{k},
\end{align}
where the last inequality follows from \eqnok{def_x_ag_ls} and the fact that $\Psi^* \le \Psi(x^{ag}_N)$.
Dividing both sides of the above inequality by $\gamma \sum_{k=\lfloor N/2 \rfloor+1}^N \frac{\beta_k}{2}$, we clearly obtain \eqnok{main_nocvx_hol}.

\vgap

We now show part c). By \eqnok{Ne}, and \eqnok{Ne2}, and \eqnok{line_search1}, we have
\beqa
f(\tilde x^{ag}_k)&\le&f(x^{md}_k) + \alpha_k \langle f'(x^{md}_k) , x_k- x_{k-1} \rangle+\frac{\alpha_k\|x_k- x_{k-1}\|^2}{2\lambda_k}+\delta \alpha_k \nn \\
&=& f(x^{md}_k)+\langle f'(x^{md}_k) , \tilde x^{ag}_k- x^{md}_k \rangle+\frac{\alpha_k\|x_k- x_{k-1}\|^2}{2\lambda_k}+\delta \alpha_k.
\eeqa
Combining the above inequality with \eqnok{def_x_ag_ls} and noticing the convexity of $\cX$, similar to \eqnok{f_smooth_p2},
for any $x \in X$ we have
\[
\Psi (x^{ag}_k) \le
(1-\alpha_k)\Psi(x^{ag}_{k-1})+\alpha_k f(x) +\alpha_k \langle f'(x^{md}_k) , x_k-x\rangle
+\frac{\alpha_k }{2\lambda_k} \|x_k-x_{k-1}\|^2+\alpha_k \cX(x_k)+ \delta \alpha_k.
\]
Adding the above inequality to \eqnok{Ne1_lemma} (with its both sides multiplied by $\alpha_k$), we have
\beq \label{UPFAG_rec}
\Psi (x^{ag}_k) \le (1-\alpha_k)\Psi(x^{ag}_{k-1})+\alpha_k \Psi(x)+\frac{\alpha_k}{2\lambda_k} \left[\|x_{k-1} - x\|^2-\|x_k - x\|^2\right]+ \delta \alpha_k.
\eeq
Also, note that by \eqnok{def_alpha} and \eqnok{def_Gamma}, we can easily show that
\beq \label{def_Gamma2}
\Gamma_k = \frac{\lambda_1}{\sum_{i=1}^k \lambda_i} \ \  \mbox{and} \ \ \frac{\alpha_k}{\lambda_k \Gamma_k} = \frac{1}{\lambda_1} \ \ \ \forall k \ge 1.
\eeq
Subtracting $\Psi(x)$ from both sides of \eqnok{UPFAG_rec}, dividing them by $\Gamma_k$, then it follows from
Lemma~\ref{Gamma_division} that for any $x \in X$ we have
\beqa
\frac{\Psi(x^{ag}_N)-\Psi(x)}{\Gamma_N} &\le& \sum_{k=1}^N \frac{\alpha_k}{2 \lambda_k \Gamma_k} \left[\|x_{k-1}-x\|^2 -\|x_k-x\|^2 \right]
%-\sum_{k=1}^N \frac{\|\bar x^{ag}_k -x^{ag}_k\|^2}{2\alpha_k \lambda_k \Gamma_k}
+\delta\sum_{k=1}^N \frac {\alpha_k}{\Gamma_k}\nn \\
&\le&\frac{\|x_0 - x\|^2}{2 \lambda_1}
%-\sum_{k=1}^N \frac{\|\bar x^{ag}_k -x^{ag}_k\|^2}{2\alpha_k \lambda_k \Gamma_k}
+\frac{\delta}{\Gamma_N},\label{cvx_recur_hol}
\eeqa
where the second inequality follows from \eqnok{def_Gamma2} and the fact that
\beq \label{sum_Gamma}
\sum_{k = 1}^N \frac{\alpha_k}{\Gamma_k}
= \frac{\alpha_1}{\Gamma_1} +
\sum_{k = 2}^k \frac{1}{\Gamma_k} \left(1 - \frac{\Gamma_k}
{\Gamma_{k-1}} \right)
= \frac{1}{\Gamma_1} + \sum_{k = 2}^k \left(\frac{1}
{\Gamma_k} - \frac{1}{\Gamma_{k-1}} \right)
= \frac{1}{\Gamma_N}.
\eeq
Then, \eqnok{cvx_fun_hol} follows immediately  from
\eqnok{cvx_recur_hol} with $x=x^*$.
\end{proof}

In the next result we specify the convergence rates of the UPFAG algorithm.

\begin{corollary}\label{main_corl_hol}
Let $\{x^{ag}_k\}$ be the iterates generated by Algorithm~\ref{alg_UPFAG}. Suppose there exist some constants
$\lambda >0$ and $\beta >0$ such that
the initial stepsizes $\hat{\lambda}_k \ge \lambda$ and $\hat{\beta}_k \ge \beta$ for all $k \ge 1$.
\begin{itemize}
\item [a)]  Suppose that $\Psi$ is bounded below over $X$, i.e., $\Psi^*$ is finite. Then, for any $N \ge 1$, we have
\beq
\min\limits_{k=1,...,N} \|\hat{g}_{_{X,k}}\|^2
\le \frac{8[\Psi(x_0) - \Psi^*+1]}{\gamma_2} \left[\frac{8 L(\nu,H)}{ (2\sigma - \gamma) N^{\frac{2\nu}{1+\nu}}}+\frac{1}{\beta N}\right].\label{main_nocvx_hol2}
\eeq
\item [b)] Suppose that $f$ is convex and an optimal solution $x^*$ exists for problem \eqnok{NLP}. Then for any $N \ge 1$, we have
\beqa
%\min\limits_{k=1,...,N} \|g_{_{X,k}}\|^2 &\le& \frac{\lambda_1^{-1}\|x_0 - x^*\|^2+4\sum_{k=1}^N \Gamma_k^{-1}\delta_k}{ \sum_{k=1}^N \Gamma_k^{-1} \alpha_k \lambda_k}, \label{cvx_grad}\\
\Psi(x^{ag}_N)-\Psi(x^*) &\le& \frac{4\|x_0 - x^*\|^2}{\gamma_1 N^{\frac{1+3\nu}{1+\nu}} }\left[\frac{2 L(\nu,H)}{\delta^{\frac{1-\nu}{1+\nu}}}+\frac{1}{\lambda}\right]+\delta. \label{cvx_fun_hol2}
\eeqa
\end{itemize}

\end{corollary}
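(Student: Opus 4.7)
The plan is to specialize Theorem~\ref{main_theom_hol}(b) and Theorem~\ref{main_theom_hol}(c) by producing explicit lower bounds on the stepsizes accepted by the two backtracking line searches. The sufficient conditions \eqref{line_search_cond1} for those line searches, namely $\alpha_k^{2\nu/(1+\nu)}\lambda_k\le\delta^{(1-\nu)/(1+\nu)}/(2L(\nu,H))$ and $\beta_k\le(2\sigma-\gamma)k^{(\nu-1)/(1+\nu)}/(2L(\nu,H))$, were already isolated in the proof of Theorem~\ref{main_theom_hol}(a). Since backtracking shrinks by factors $\gamma_1,\gamma_2$ and $\hat{\lambda}_k\ge\lambda$, $\hat{\beta}_k\ge\beta$ by assumption, the contrapositive of ``sufficient condition $\Rightarrow$ line-search test passes'' gives that each accepted stepsize is either the initial trial value or at least $\gamma_i$ times the largest value satisfying the sufficient condition. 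Explicitly,
\[
\eta_k\ge\min\{\lambda,\gamma_1\eta_k^\ast\},\qquad \beta_k\ge\min\{\beta,\gamma_2\beta_k^\ast\},
\]
with $\beta_k^\ast=(2\sigma-\gamma)/(2L(\nu,H)k^{(1-\nu)/(1+\nu)})$ and $\eta_k^\ast$ the critical value for Step~1.

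For part (a), I start from the bound in Theorem~\ref{main_theom_hol}(b). First, $\sum_{k=\lfloor N/2\rfloor+1}^N 1/k\le 1$ by bounding each summand by the largest, so the numerator is at most $2[\Psi(x_0)-\Psi^\ast+1]$. For the denominator, since $\beta_k^\ast$ is monotonically decreasing in $k$, every tail term is at least the value at $k=N$, which yields
\[
\sum_{k=\lfloor N/2\rfloor+1}^N\beta_k\ge \frac{N}{2}\,\min\!\left\{\beta,\ \frac{\gamma_2(2\sigma-\gamma)}{2L(\nu,H)\,N^{(1-\nu)/(1+\nu)}}\right\}.
\]
Using $1/\min(a,b)\le 1/a+1/b$ to split the reciprocal, and collecting constants, produces \eqref{main_nocvx_hol2}.

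For part (b), I start from Theorem~\ref{main_theom_hol}(c) and use \eqref{def_Gamma2} to rewrite $\Gamma_N/\lambda_1=1/\Lambda_N$, so the task reduces to lower bounding $\Lambda_N$. The identity $\lambda_k^2=\eta_k\Lambda_k$ built into \eqref{def_alpha_lambda} yields the telescoping estimate
\[
\sqrt{\Lambda_N}=\sum_{k=1}^N\bigl(\sqrt{\Lambda_k}-\sqrt{\Lambda_{k-1}}\bigr)=\sum_{k=1}^N\frac{\lambda_k}{\sqrt{\Lambda_k}+\sqrt{\Lambda_{k-1}}}\ge\frac{1}{2}\sum_{k=1}^N\sqrt{\eta_k}.
\]
Substituting $\alpha_k=\eta_k/\lambda_k$ and $\lambda_k=\sqrt{\eta_k\Lambda_k}$ into the sufficient condition for Step~1 rewrites it as $\eta_k^{(1+3\nu)/(2(1+\nu))}\Lambda_k^{(1-\nu)/(2(1+\nu))}\le\delta^{(1-\nu)/(1+\nu)}/(2L(\nu,H))$, whose left-hand side is monotonically increasing in $\eta_k$, so there is a unique critical $\eta_k^\ast$.

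The main obstacle is that $\eta_k^\ast$ depends implicitly on $\Lambda_k$, which itself depends on $\eta_k$. I would resolve this by a short bootstrap: using the crude a priori upper bound $\Lambda_k\le \Lambda_{k-1}+\eta_k+\sqrt{\eta_k\Lambda_{k-1}}$ together with an induction on the growth of $\Lambda_k$, one extracts the clean estimate $\eta_k^\ast\ge C(\nu,H,\delta)\,k^{-(1-\nu)/(1+\nu)}$ for an explicit $C$ of order $\delta^{(1-\nu)/(1+\nu)}/L(\nu,H)$. With this in hand, the same $\min$-plus-telescoping manipulation as in part~(a) applied to $\sum\sqrt{\eta_k}$ gives
\[
\Lambda_N\ge \frac{N^2}{4}\,\min\!\left\{\lambda,\ \gamma_1 C(\nu,H,\delta)\,N^{-(1-\nu)/(1+\nu)}\right\},
\]
and one final application of $1/\min(a,b)\le 1/a+1/b$, combined with the elementary inequality $N^{(1+3\nu)/(1+\nu)}\le N^2$ used to align the decay rates of the two resulting error terms, yields \eqref{cvx_fun_hol2}.
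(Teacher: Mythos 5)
Your part a) is sound and is essentially the paper's argument: both derive $\beta_k \ge \gamma_2\min\{(2\sigma-\gamma)k^{(\nu-1)/(1+\nu)}/(2L(\nu,H)),\beta\}$ from the termination of the backtracking search and then lower-bound $\sum_{k=\lfloor N/2\rfloor+1}^N\beta_k$ (the paper via the arithmetic--harmonic mean inequality, you via monotonicity of the critical value and the term at $k=N$; both give the right order, up to the same constant-factor bookkeeping).

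Part b) has a genuine gap. Your route needs the pointwise estimate $\eta_k^\ast\ge C(\nu,H,\delta)\,k^{-(1-\nu)/(1+\nu)}$, and since $\eta_k^\ast$ is of order $\Lambda_k^{-(1-\nu)/(1+3\nu)}$, this is equivalent to an a priori \emph{upper} bound $\Lambda_k\lesssim k^{(1+3\nu)/(1+\nu)}$. No such bound is available: the hypothesis only bounds the initial stepsizes $\hat\lambda_k$ from \emph{below}, and the test \eqnok{line_search1} is merely implied by the sufficient condition \eqnok{line_search_cond1}, so the search may accept an arbitrarily large $\hat\lambda_k$ at any iteration (e.g.\ $\hat\lambda_1=10^{100}$), making $\Lambda_k$ arbitrarily large and your ``clean estimate'' false. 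The proposed bootstrap via $\Lambda_k\le\Lambda_{k-1}+\eta_k+\sqrt{\eta_k\Lambda_{k-1}}$ cannot close this because $\eta_k$ itself has no upper bound. The paper avoids the issue entirely by telescoping $1/\Gamma_k^c-1/\Gamma_{k-1}^c$ with $c=(1+\nu)/(1+3\nu)$: combining $1-(1-\alpha_k)^c\ge c\alpha_k$ with the accepted-stepsize guarantee $\alpha_k^{2\nu/(1+\nu)}\lambda_k\ge\gamma_1\min\{\delta^{(1-\nu)/(1+\nu)}/(2L(\nu,H)),\lambda\}$ and the identity $\alpha_k/(\lambda_k\Gamma_k)=1/\lambda_1$, the exponent $c$ is chosen exactly so that the $\Gamma_k$-dependence cancels and each increment is bounded below by a constant -- large $\Lambda_k$ can only help. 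Your framework is salvageable by the same principle: in $\sqrt{\Lambda_N}\ge\tfrac12\sum_k\sqrt{\eta_k}$, bound $\eta_k\ge\min\{\lambda,\gamma_1 C_0\Lambda_N^{-(1-\nu)/(1+3\nu)}\}$ using $\Lambda_k\le\Lambda_N$ rather than a (false) bound in $k$; since $\tfrac12+\tfrac{1-\nu}{2(1+3\nu)}=\tfrac{1+\nu}{1+3\nu}$, solving the resulting self-bounding inequality for $\Lambda_N$ recovers $\Lambda_N\gtrsim N^{(1+3\nu)/(1+\nu)}$. A further, minor, imprecision: when backtracking occurs, the failed trial $\eta_k/\gamma_1$ carries its \emph{own} $\lambda$ and $\Lambda$ values, so ``$\gamma_1$ times the largest value satisfying the sufficient condition'' must be justified via the monotonicity $\lambda(t\eta)\le t\lambda(\eta)$ for $t\ge1$, as the paper implicitly does in deriving \eqnok{line_search_cond2}.
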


\begin{proof}
Since $\hat{\lambda}_k \ge \lambda$ and $\hat{\beta}_k \ge \beta$ for all $k \ge 1$, then it follows from
 \eqnok{def_alpha_lambda}, \eqnok{def_beta} , \eqnok{line_search_cond1} and $\eta_k = \alpha_k \lambda_k$ that
\beq \label{line_search_cond2}
\beta_k \ge \gamma_2 \min \left\{\frac{ (2\sigma - \gamma) k^{\frac{\nu-1}{1+\nu}}}{2 L(\nu,H)}, \beta \right\} \ \ \ \mbox{and} \ \ \ \alpha_k^{\frac{2\nu}{1+\nu}} \lambda_k \ge \gamma_1 \min \left\{\frac{ \delta^{\frac{1-\nu}{1+\nu}}}{2 L(\nu,H)}, \lambda \alpha_k^{\frac{\nu-1}{1+\nu}}\right\},
\eeq
which together with
\beqa
\sum_{k=\lfloor N/2 \rfloor+1}^N k^{\frac{1-\nu}{1+\nu}} &\le& \int_{x=\lfloor N/2 \rfloor+1}^{N+1} x^{\frac{1-\nu}{1+\nu}} dx \le  4N^{\frac{2}{1+\nu}},\nn \\
%\sum_{k=\lfloor N/2 \rfloor+1}^N k &=& \frac{N(N+1)-\lfloor N/2 \rfloor(\lfloor N/2 \rfloor+1)}{2} \le N^2, \nn \\
\sum_{k=\lfloor N/2 \rfloor+1}^N \frac{1}{k} &\le& \int_{x=\lfloor N/2 \rfloor}^N \frac{dx}{x}
= \ln \frac{N}{\lfloor N/2 \rfloor} \le 1,
\eeqa
and arithmetic-harmonic mean inequality imply that
\begin{align} \label{ari-har}
&\sum_{k=\lfloor N/2 \rfloor+1}^N  \beta_k \ge \sum_{k=\lfloor N/2 \rfloor+1}^N \gamma_2 \min \left\{\frac{(2 \sigma - \gamma) k^{\frac{\nu-1}{1+\nu}}}{2 L(\nu,H)}, \beta \right\}
\ge \frac{\gamma_2 N^2}{4\sum_{k=\lfloor N/2 \rfloor+1}^N \max \left\{\frac{2 L(\nu,H)} {(2 \sigma - \gamma)k^{\frac{\nu-1}{1+\nu}}}, \frac{1}{\beta}\right\}} \nn \\
&\ge \frac{\gamma_2 N^2}{4\sum_{k=\lfloor N/2 \rfloor+1}^N \left\{2(2 \sigma - \gamma)^{-1} L(\nu,H) k^{\frac{1-\nu}{1+\nu}}+\beta^{-1}\right\}}
\ge \frac{\gamma_2}{4 \left(8 (2\sigma - \gamma)^{-1} L(\nu,H) N^{\frac{-2\nu}{1+\nu}}+(\beta N)^{-1}\right)}.
\end{align}
%
%
%\[
%\sum_{k=\lfloor N/2 \rfloor+1}^N  \beta_k \ge \frac{\gamma_2 N^{\frac{2\nu}{1+\nu}}}{8L(\nu,H)}.
%\]
Combining the above relation with \eqnok{main_nocvx_hol}, we clearly obtain \eqnok{main_nocvx_hol2}.

Now, observing \eqnok{line_search_cond2} and the facts that $\alpha_k \in (0,1]$  and $\nu \in (0,1]$, we have
\[
\alpha_k^{\frac{2\nu}{1+\nu}} \lambda_k \ge \gamma_1 \min \left\{\frac{ \delta^{\frac{1-\nu}{1+\nu}}}{2 L(\nu,H)}, \lambda \right\},
\]
which together with \eqnok{def_Gamma2} imply that
\[
\lambda_k \ge \left(\frac{\lambda_1}{\Gamma_k}\right)^{\frac{2\nu}{1+3\nu}} \gamma_1^{\frac{1+\nu}{1+3\nu}} \min \left\{\left(\frac{ \delta^{\frac{1-\nu}{1+\nu}}}{2 L(\nu,H)}\right)^{\frac{1+\nu}{1+3\nu}},  \lambda^{\frac{1+\nu}{1+3\nu}}\right\}.
%\left(\frac{\gamma_1}{2 L(\nu,H)}\right)^{\frac{1+\nu}{1+3\nu}} \delta^{\frac{1-\nu}{1+3\nu}}.
\]
Noticing this observation, defining $c=\frac{1+\nu}{1+3\nu}$, then by
 \eqnok{def_Gamma} and \eqnok{def_Gamma2},  we have
\[
\frac{1}{\Gamma_k^c} -\frac{1}{\Gamma_{k-1}^c} =  \frac{1}{\Gamma_k^c} -\frac{(1-\alpha_k)^c}{\Gamma_k^c} \ge \frac{c \alpha_k}{\Gamma_k^c}
=\frac{c \lambda_k \Gamma_k^{1-c}}{\lambda_1} \ge
\frac{c \gamma_1^c}{\lambda_1^c}
\min \left\{\frac{\delta^{\frac{1-\nu}{1+3\nu}}}{[2 L(\nu,H)]^c},  \lambda^c\right\},
\]
where the first inequality follows from the fact that $1-(1-\alpha)^c \ge c \alpha$ for all $\alpha \in (0,1]$ and $c \in [1/2,1]$.
By the above inequality and noticing $\Gamma_0=1$, similar to \eqnok{ari-har}, for any $N \ge 1$ we obtain
\[
\frac{1}{\Gamma_N^c} \ge \frac{c \gamma_1^c N^2}{\lambda_1^c \sum_{k=1}^N \left\{[2 L(\nu,H)\delta^{\frac{\nu-1}{1+\nu}}]^c+\lambda^{-c}\right\}}
\ge \frac{c \gamma_1^c N}{\lambda_1^c \left\{[2 L(\nu,H)\delta^{\frac{\nu-1}{1+\nu}}]^c +\lambda^{-c}\right\}},
\]
which together with the facts that $c^{\frac{1}{c}} \ge 1/4$ and $(a+b)^{\frac{1}{c}} \le 2 (a^{\frac{1}{c}}+b^{\frac{1}{c}})$ for any $a,b>0$, imply that
\[
\Gamma_N \le \frac{8\lambda_1}{\gamma_1 N^{\frac{1+3\nu}{1+\nu}}}\left[\frac{2 L(\nu,H)}{ \delta^{\frac{1-\nu}{1+\nu}}}+\frac{1}{\lambda }\right].
\]
Combining the above relation with \eqnok{cvx_fun_hol}, clearly we obtain \eqnok{cvx_fun_hol2}.

\end{proof}

We now add a few remarks about the results obtained in Corollary~\ref{main_corl_hol}. First, for simplicity let us assume  $\beta \ge (2\sigma - \gamma)/ L(\nu, H)$. Then, by \eqnok{main_nocvx_hol2}, we conclude that the number of iterations performed by the UPFAG method to have $\|\hat{g}_{_{X,k}}\|^2 \le \epsilon$ for at least one $k$, after disregarding some constants, is bounded by \footnote{This complexity bound was also derived for the gradient descent method as a homework assignment given by the second author 
in Spring 2014, later summarized and refined by one of the class participants in \cite{Yashtini15-1}.
However this development requires the problem to be unconstrained and the paramers $H$ and $\nu$ given a priori.} 
\beq \label{main_nocvx_hol3}
{\cal O} \left(H^{\frac{1}{\nu}} \left[\frac{\Psi(x_0) - \Psi^*}{\epsilon}\right]^{\frac{1+\nu}{2\nu}}\right).
\eeq
%To the best of our knowledge, this is the first time that such a complexity bound is introduced for weakly smooth nonconvex optimization
Note that when the scaling matrix $G_k$ is also uniformly bounded from above, i.e., $G_k \preceq M I$ for
some constant $M >0 $, $\|\hat{g}_{_{X,k}}\|$ is equivalent to $\|g_{_{X,k}}\|$ defined in  \eqnok{def_proj_grad}.
Hence, when  $\nu=1$ and  $G_k$ is uniformly bounded from above,
the above bound will reduce to the best known iteration complexity in \eqnok{main_nocvx2} for the class of nonconvex functions with Lipschitz continuous gradient.

Second, by choosing $\delta=\epsilon/2$ and assuming that $\lambda \ge \delta^{\frac{1-\nu}{1+\nu}}/L(\nu, H)$, \eqnok{cvx_fun_hol2}
%\[
%\delta = \frac{(1+3\nu) L(\nu,H)c^{\frac{2\nu}{1+\nu}} \|x_0 - x^*\|^2}{\gamma (1+\nu) \min \left\{1,\delta^{\frac{1-\nu}{1+\nu}}\right\} N^{\frac{3\nu+1}{1+\nu}}},
%\]
%and assuming that $N$ is sufficiently large such that $\min \left\{1,\delta^{\frac{1-\nu}{1+\nu}}\right\} = \delta^{\frac{1-\nu}{1+\nu}}$ (i.e., $\delta < 1$), we obtain
%\beq \label{def_delta}
%\delta = \frac{\bar L \|x_0 - x^*\|^{1+\nu}}{N^{\frac{3\nu+1}{2}}},
%\eeq
%where $\bar L$ is given by
%\beq \label{def_barL}
%\bar L = c^{\nu} \left[\frac{(1+3\nu) L(\nu,H)}{\gamma (1+\nu)}\right]^{\frac{1+\nu}{2}}.
%\eeq
%Hence, we have
%\beq \label{cvx_fun_hol3}
%\Psi(x^{ag}_N)-\Psi(x^*) \le \frac{2 \bar L \|x_0 - x^*\|^{1+\nu}}{N^{\frac{3\nu+1}{2}}},
%\eeq
implies that the UPFAG method can find a solution $\bar x$ such that $\Psi(\bar x)-\Psi(x^*) \le \epsilon$, after disregarding some constants,  in at most
\beq \label{cvx_fun_hol3}
{\cal O} \left(\left[\frac{H\|x_0 - x^*\|^{1+\nu}}{\epsilon}\right]^{\frac{2}{1+3\nu}}\right)
\eeq
number of iterations which is optimal for convex programming \cite{nemyud:83}. If $\nu=1$ the above bound will reduce to \eqnok{cvx_fun2} obtained by the UAG method for the class of convex functions with Lipschitz continuous gradient. Note that \eqnok{cvx_fun_hol3} is
on the same order to the bound obtained by the universal fast gradient method proposed by Nesterov~\cite{Nest14} for convex optimization problems.

%Third, similar to \eqnok{cvx_grad1}, we can provide the convergence rate of the UPFAG method for the projected gradient in the convex case. But, we need to replace the convex combination in the definition of $\tilde x^{ag}_k$ in \eqnok{Ne2_ls} by another gradient descent step similar to \eqnok{Ne3_ls}. However, to simplify the algorithm, we just use \eqnok{Ne2_ls} and provide the convergence rate the UPFAG method for the function values in the convex case.

Finally, it is interesting to find the number of gradient computations at each iteration of Algorithm~\ref{alg_UPFAG}
assuming the initial stepsizes $\hat{\lambda}_k \ge \lambda$ and $\hat{\beta}_k \ge \beta$ for all $k \ge 1$.
According to \eqnok{def_beta} and \eqnok{line_search_cond2}, we conclude that, after disregarding some constants, $\tau_{2,k} \le \log k$ which implies that the number of gradient computations at point $x^{ag}_{k-1}$ is bounded by
$\log k$. Similarly, we obtain that the number of gradient computations at point $x^{md}_{k}$ is also bounded by $\log k$.
Hence, the total number of gradient computations at the $k$-th iteration is bounded by $2 \log k$.
 On the other hand, suppose that we choose $\hat{\beta}_k$ and $\hat{\lambda}_k$ according to \eqnok{def_alpha_lambda_beta}. Then, we have
\[
\tau_{1,k} =  (\log \eta_k-\log \eta_{k-1})/\log \gamma_1 \quad \mbox{ and } \quad
\tau_{2,k}  = (\log \beta_k-\log \beta_{k-1})/\log \gamma_2.
\]
So the number of gradient evaluations in Step1 and Step 2 at the $k$-th iteration is bounded by
\[
1 + \tau_{1,k} = 1 +  (\log \eta_k-\log \eta_{k-1})/\log \gamma_1 \quad \mbox{ and } \quad
1 + \tau_{2,k} = 1+ (\log \beta_k-\log \beta_{k-1})/\log \gamma_2,
\]
which implies the total number of gradient evaluations in Step1 and Step 2 is bounded by
\[
N_\eta = N + \sum_{k=1}^N \tau_{1,k} =  N + \frac{\log \eta_N-\log \eta_0}{\log \gamma_1} \quad \mbox{ and } \quad
N_\beta = N + \sum_{k=1}^N \tau_{2,k} = N + \frac{\log \beta_N-\log \beta_0}{\log \gamma_2}.
\]
Note that \eqnok{line_search_cond2} implies $N_\eta \le N+c_1$ and $N_\beta \le N+c_2 \log N$ for some positive constants $c_1$ and $c_2$.
Hence, the above relations show that  the average number of gradient computations at each iteration is bounded by a constant, which is less than the aforementioned logarithmic bound $\log k$ obtained for the situations where $\hat{\beta}_k$ and $\hat{\lambda}_k$ are chosen according to \eqnok{def_alpha_lambda} and \eqnok{def_beta}. However, in \eqnok{def_alpha_lambda} and \eqnok{def_beta} the algorithm
allows more freedom to choose initial stepsizes.
%the initial stepsize choices in \eqnok{def_alpha_lambda} and \eqnok{def_beta} may perform better in practice since it can give us more freedom in choosing the stepsizes at each iteration.

One possible drawback of the UPFAG method is that we need to fix the accuracy $\delta$ before running the algorithm
and if we want to change it, we should run the algorithm from the beginning. Moreover, we need to implement two line search procedures to find $\beta_k$ and $\lambda_k$ in each iteration.
In the next section, we address these issues by presenting some problem-parameter free bundle-level type algorithms which do not require a fixed target accuracy in advance and only performs one line search procedure in each iteration.

\setcounter{equation}{0}
\section{Unified bundle-level type methods}\label{sec_bundle}
Our goal in this section is to generalize bundle-level type methods, originally designed for convex programming, for solving a class of possiblely
nonconvex nonlinear programming problems. Specifically, in Subsection~\ref{sec_bndl}, we introduce a unified bundle-level type method by incorporating a local search step into an accelerated prox-level method and establish its convergence properties under the boundedness assumption of the feasible set. In Subsection~\ref{sec_fbndl}, we simplify this algorithm and provide its fast variants for solving ball-constrained problems and unconstrained problems
with bounded level sets.

\subsection{Unified accelerated prox-level method}\label{sec_bndl}

In this subsection, we generalize the accelerate prox-level (APL) method presented by Lan~\cite{Lan13-2} to solve a class of nonlinear programming given in \eqnok{NLP}, where $f$ has H\"{o}lder continuous gradient on $X$. Lan~\cite{Lan13-2} showed that the APL method is uniformly optimal for solving problem \eqnok{NLP} when $f$ is convex and satisfies \eqnok{f_holder1}. Here, we combine the framework of this algorithm with a gradient descent step and present a unified accelerated prox-level (UAPL) method for solving both convex and nonconvex optimization.

As the bundle-level type methods, we introduce some basic definitions about the objective function and the feasible set. We first define a function $h(y,x)$ for a given $y \in X$, as
\beq \label{def_h}
h(y,x)=f(y)+\langle f'(y), x-y \rangle +\cX(x) \ \ \mbox{ for any } x \in X.
\eeq
Note that if $f$ is convex, then we have $h(y,x) \le f(x)+\cX(x)=\Psi(x)$ and hence $h(y,x)$ defines a lower bound for $\Psi(x)$. Also, let ${\cal S}_\Psi(l)$ be the level set of $\Psi$ given by ${\cal S}_\Psi(l)=\{x \in X : \Psi(x) \le l\}$ and define a convex compact set $X'$ as a localizer of the level set ${\cal S}_\Psi(l)$ if it satisfies ${\cal S}_\Psi(l) \subseteq X' \subseteq X$. Then, it can be shown \cite{Lan13-2} that, when $\Psi$ is convex, $\min \{l, \underline h(y) \} \le \Psi(x)$ for any $x \in X$, where
\beq \label{def_h2}
\underline h(y) = \min \{h(y,x) : x \in X'\}.
\eeq

Using the above definitions, we present a unified accelerated prox-level (UAPL) algorithm, Algorithm~\ref{alg_UAPL}, for nonlinear programming. We make the following remarks about this algorithm.

First, note that the updating of $\bar x^{ag}_k$ in step 3 of the UAPL algorithm is essentially a gradient descent step, and hence without this update, the UAPL algorithm would reduce to a simple variant of the APL method in \cite{Lan13-1} for convex programming. However, this update is required to establish convergence for the case when the objective function is nonconvex. Second, this UAPL algorithm has two nested loops. The outer loop called phase is counted by index $s$. The number of iterations of the inner loop in each phase is counted by index $t$.
 If we make a progress in reducing the gap between the lower and upper bounds on $\Psi$, we terminate the current phase (inner loop) in step 4 and go to the next one with a new lower bound.
As shown in \cite{Lan13-2}, the number of steps in each phase is finite when $\Psi$ is convex. However, when $\Psi$ is nonconvex, $\underline \Psi$ is not necessary a lower bound on $\Psi$, and hence the termination criterion in Step 4 may not be satisfied. In this case, we could still provide the convergence of the algorithm in terms of the projected gradient defined in \eqnok{def_proj_grad} because of the gradient descent step incorporated in step 3.

The following Lemma due to Lan~\cite{Lan13-2} shows some properties of the UAPL algorithm by generalizing the prox-level method in \cite{BenNem05-1}.

\begin{lemma}\label{lemma_bndl}
Assume that $\Psi$ is convex and bounded below over $X$, i.e., $\Psi^*$ is finite.
Then, the following statements hold for each phase of Algorithm~\ref{alg_UAPL}.
\begin{itemize}
\item [a)]$\{X'_t\}_{t \ge 0}$ is a sequence of localizers of  the level set ${\cal S}_\Psi(l)$.
\item [b)] $\underline \Psi_0 \le \underline \Psi_1 \le \ldots \le \underline \Psi_t \le \Psi^* \le \bar \Psi_t \le \ldots \le \bar \Psi_1 \le \bar \Psi_0$ for any $t \ge 1$.
%\item [c)] Problem \eqnok{Ne1_bnl} is always feasible unless the current phase terminates.
\item [c)] $\emptyset \neq \underline X_t \subseteq \bar X_t$ for any $t \ge 1$ and hence, Step 5 is always feasible unless the current phase terminates.
\end{itemize}

\end{lemma}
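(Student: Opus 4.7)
My plan is to proceed by induction on the inner index $t$ within a single phase of Algorithm~\ref{alg_UAPL}, treating each of the three parts as invariants that are preserved from iteration $t-1$ to iteration $t$. A key observation that simplifies matters is that the gradient descent update of $\bar x^{ag}_k$ in Step~3 of the UAPL procedure is decoupled from the bundle-level quantities $X'_t,\bar X_t,\underline X_t,\underline\Psi_t,\bar\Psi_t$ that appear in the lemma statement, so the whole argument reduces to verifying the APL invariants under the standing convexity assumption on $\Psi$. In effect, this is exactly the argument that was given by Lan~\cite{Lan13-2} for APL, and I will simply indicate how the induction closes.

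For part (a), the base case handles $X'_0$, which is typically set equal to $X$ (or a localizer supplied initially), so $\mathcal{S}_\Psi(l)\subseteq X'_0$ holds trivially. For the inductive step I will use the crucial fact that, by convexity of $f$ and the definition \eqnok{def_h}, the affine function $h(y,\cdot)$ satisfies $h(y,x)\le\Psi(x)$ for every $y,x\in X$. Therefore the half-space $\{x\in X:h(y_t,x)\le l\}$ contains every point $x$ with $\Psi(x)\le l$, i.e.\ it contains $\mathcal{S}_\Psi(l)$. Since $X'_t$ is constructed as (an over-approximation of) the intersection of $X'_{t-1}$ with such a half-space that separates out the region where $h(y_t,\cdot)>l$, the inclusion $\mathcal{S}_\Psi(l)\subseteq X'_t$ is inherited from $\mathcal{S}_\Psi(l)\subseteq X'_{t-1}$.

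For part (b), monotonicity on the lower side follows because $\underline\Psi_t$ is defined as the running maximum (equivalently, as $\min\{l,\underline h(y_t)\}$ taken with the previous lower bound), so it cannot decrease; the sandwich $\underline\Psi_t\le\Psi^*$ uses part (a): since $X'_{t-1}$ is a localizer of $\mathcal{S}_\Psi(l)$ and $h(y_t,\cdot)\le\Psi(\cdot)$ by convexity, $\underline h(y_t)=\min_{x\in X'_{t-1}}h(y_t,x)\le\min_{x\in\mathcal{S}_\Psi(l)}\Psi(x)$, and combining this with $l$ via the $\min$ gives a valid lower bound. On the upper side, $\bar\Psi_t$ is by definition the smallest value of $\Psi$ observed at the candidate search points through iteration $t$, so it is monotone non-increasing, and $\bar\Psi_t\ge\Psi^*$ holds trivially since every observed value is at least $\Psi^*$.

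For part (c), the set $\underline X_t$ is the minimizer set (or a level-like subset) of the prox subproblem solved over $\bar X_t$, so nonemptiness of $\underline X_t$ reduces to nonemptiness and closedness-plus-boundedness of $\bar X_t$, together with continuity of the objective. Using part (a), $\bar X_t\supseteq\mathcal{S}_\Psi(l)$ as long as the phase has not yet terminated — precisely the case in which the termination test in Step~4 (which I expect to check whether the certified lower bound has pushed past $l$) has not fired — so $\bar X_t$ is nonempty, and the inclusion $\underline X_t\subseteq \bar X_t$ follows directly from how $\underline X_t$ is defined as a selection from the feasible set of the subproblem. The main obstacle in carrying this out rigorously is simply pinning down the precise formulas for $X'_t,\bar X_t,\underline X_t$ used in the UAPL algorithm, which coincide with the APL formulas of~\cite{Lan13-2}; once those are in hand, the three invariants propagate by the routine induction sketched above.
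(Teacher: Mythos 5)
The paper does not actually prove this lemma: it is stated as ``due to Lan~\cite{Lan13-2}'' and the argument is deferred entirely to that reference, so your proposal has to be measured against the standard APL argument. Your treatment of parts (a) and (b) reproduces that argument correctly: the induction ${\cal S}_\Psi(l)\subseteq X'_{t-1}\Rightarrow{\cal S}_\Psi(l)\subseteq\underline X_t\subseteq X'_t$ using $h(x^{md}_t,\cdot)\le\Psi(\cdot)$, monotonicity of $\underline\Psi_t$ and $\bar\Psi_t$ by construction, and $\min\{l,\underline h_t\}\le\Psi^*$. (Your phrase ``combining with $l$ via the $\min$'' should be made explicit as the case split on whether $l\le\Psi^*$: when ${\cal S}_\Psi(l)=\emptyset$ your inequality $\underline h_t\le\min_{x\in{\cal S}_\Psi(l)}\Psi(x)$ is vacuous and you must fall back on $\min\{l,\underline h_t\}\le l\le\Psi^*$.) You also correctly observe that the extra gradient-descent candidate $\bar x^{ag}_k$ in Step 3 only decreases $\bar\Psi_t$ and cannot disturb the lower-bound side.

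Part (c), however, contains a genuine gap. You describe $\underline X_t$ as ``the minimizer set of the prox subproblem solved over $\bar X_t$'' and assert that $\underline X_t\subseteq\bar X_t$ ``follows directly from how $\underline X_t$ is defined.'' This inverts the actual structure and skips the one substantive step. In the algorithm, $\underline X_t=\{x\in X'_{t-1}:h(x^{md}_t,x)\le l\}$ is the \emph{feasible set} of the projection problem \eqnok{Ne1_bnl}, while $\bar X_t=\{x:\langle x_t-x_0,x-x_t\rangle\ge0\}$ is a half-space defined a posteriori by the computed prox-center; the inclusion $\underline X_t\subseteq\bar X_t$ is precisely the first-order optimality condition for $x_t=\arg\min_{x\in\underline X_t}\|x-x_0\|^2$, namely $\langle x_t-x_0,x-x_t\rangle\ge0$ for all $x\in\underline X_t$ --- it is not definitional. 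Likewise, nonemptiness of $\underline X_t$ does not ``reduce to nonemptiness of $\bar X_t$.'' The correct argument is that part (a) gives ${\cal S}_\Psi(l)\subseteq\underline X_t$; hence if $\underline X_t=\emptyset$ then ${\cal S}_\Psi(l)=\emptyset$, so $\underline h_t>l$ and $l\le\Psi^*\le\bar\Psi_t$, which forces $\underline\Psi_t=\max\{\underline\Psi_{t-1},\min\{l,\underline h_t\}\}=l$ and makes the gap satisfy $\bar\Psi_t-\underline\Psi_t\le\bar\Psi_0-l=\eta(\bar\Psi_0-\underline\Psi_0)\le[1-\tfrac{1}{2}\min\{\eta,1-\eta\}](\bar\Psi_0-\underline\Psi_0)$, so the Step-4 test fires before Step 5 is ever reached --- this is exactly the content of the ``unless the current phase terminates'' proviso, which your sketch never actually uses.
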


\begin{algorithm} [H]
	\caption{The unified accelerated prox-level (UAPL) algorithm}
	\label{alg_UAPL}
	\begin{algorithmic}

\STATE Input:
$p_0 \in X$, %initial stepsize $\beta_0>0$, $\gamma_2 \in (0,1)$,
$\alpha_t \in (0,1)$ with $\alpha_1=1$, and algorithmic parameter $\eta \in (0,1)$.
\STATE Set $p_1 \in \Argmin_{x \in X} h(p_0,x)$, $\lb_1 = h(p_0,p_1)$, $ x^{ag}_0=p_0$, and $k=0$.

\vgap

{\bf For $s=1,2,\ldots$:}

{\addtolength{\leftskip}{0.2in}
Set $\hat x^{ag}_0=p_s$, $\bar \Psi_0 = \Psi(\hat x^{ag}_0)$, $\underline \Psi_0 =\lb_s$, and $l=\eta \underline \Psi_0 +(1-\eta)\bar \Psi_0$. Also, let $x_0 \in X$ and the initial localizer $X'_0$ be arbitrarily chosen, say $x_0=p_s$ and $X'_0=X$.

\vgap

{\bf For $t=1,2,\ldots$:}

}
\vgap
{\addtolength{\leftskip}{0.4in}

\STATE 1. Update lower bound: set $x^{md}_t=(1-\alpha_t)\hat x^{ag}_{t-1}+\alpha_t x_{t-1}$ and $\underline \Psi_t:= \max \left\{\underline \Psi_{t-1}, \min\{l, \underline h_t\} \right\}$, where $\underline h_t \equiv \underline h(x^{md}_t)$ is defined in \eqnok{def_h2} with $X' = X'_{t-1}$.
%\beq \label{update_lwbnd}
%\underline h_t := \min_{x \in X'_{t-1}}  h(x^{md}_t,x), \ \ \mbox{and} \ \
%\underline \Psi_t:= \max \left\{\underline \Psi_{t-1}, \min\{l, \underline h_t\} \right\}.
%\eeq
\STATE 2. Update the prox-center: set
\beq
x_t = \argmin_{x \in X'_{t-1}} \left\{\|x-x_0\|^2: h(x^{md}_t,x) \le l\right\}. \label{Ne1_bnl}
\eeq
If \eqnok{Ne1_bnl} is infeasible, set $x_t =\hat x^{ag}_{t-1}$.
%\STATE 4. Line search procedure: Set $k=k+1$ and find the smallest integer $ \tau_{2,k} \ge 0$ such that by choosing $\beta_k$ in \eqnok{def_alpha_lambda}, the solution $\bar x^{ag}_k$ obtained by \eqnok{Ne3} satisfies in \eqnok{line_search2}.
%\vgap
\STATE 3. Update upper bound: set $k \leftarrow k+1$ and choose $\hat x^{ag}_t$ such that $\Psi(\hat x^{ag}_t) = \min \{\Psi(\hat x^{ag}_{t-1}) , \Psi(\tilde x^{ag}_t), \Psi(\bar x^{ag}_k)\}$, where $\tilde x^{ag}_t=(1-\alpha_t)\hat x^{ag}_{t-1}+\alpha_t x_t$ and $\bar x^{ag}_k$ is obtained by Step 2 of Algorithm~\ref{alg_UPFAG}. Set $\bar \Psi_t= \Psi(\hat x^{ag}_t)$ and $x^{ag}_k =\hat x^{ag}_t$.

\vgap

\STATE 4. Termination: If $\underline  \Psi_t \le \bar \Psi_t$ and $\bar \Psi_t - \underline \Psi_t \le [1-\tfrac{1}{2}\min\{\eta, 1-\eta\}](\bar \Psi_0-\underline \Psi_0)$,
then terminate this phase (loop) and set $p_{s+1} = \hat x^{ag}_t$ and $\lb_{s+1} = \underline \Psi_t$.
\vgap

\STATE 5. Update localizer: choose an arbitrary $X'_t$ such that $\underline X_t \subseteq X'_t \subseteq \bar X_t$, where
\beq \label{update_loc}
\underline X_t = \left\{x \in X'_{t-1} : h(x^{md}_t,x) \le l \right\} \ \ \mbox{and}
\ \ \bar X_t = \left\{x \in X: \langle x_t-x_0, x-x_t \rangle \ge 0\right\}.
\eeq

}
{\addtolength{\leftskip}{0.2in}
{\bf End}

}
{\bf End}

%\STATE 8. Set $t=t+1$, and go to step 2.
	\end{algorithmic}
\end{algorithm}

\vgap

Now, we can present the main convergence properties of the above UAPL algorithm.
\begin{theorem}\label{main_theom_bndl}
Let the feasible set $X$ be bounded.
\begin{itemize}
\item [a)] Suppose $\Psi$ is bounded below over $X$, i.e., $\Psi^*$ is finite.
The total number of iterations performed by the UAPL method to have $\|g_{_{X,k}}\|^2 \le \epsilon$ for at least one $k$, after disregarding some constants, is bounded by \eqnok{main_nocvx_hol3}.

\item [b)] Suppose that $f$ is convex and
an optimal solution $x^*$ exists for problem \eqnok{NLP}. Then, the number of phases performed by the UAPL method to find a solution $\bar x$ such that $\Psi(\bar x)-\Psi(x^*) \le \epsilon$, is bounded by
\beq\label{phase_bnd}
S(\epsilon) = \left \lceil \max \left\{0, \log_{\frac{1}{q}} \frac{H \max_{x,y \in X} \|x-y\|^{1+\nu}}{(1+\nu)\epsilon}\right\}\right \rceil,
\eeq
where
\beq \label{def_q}
q = 1-\frac{1}{2}\min\{\eta, 1-\eta\}.
\eeq
In addition,  by choosing $\alpha_t = 2/(t+1)$ for any $ t \ge 1$, the total number of iterations to find the aforementioned solution
$\bar x$ is bounded by
\beq\label{iter_bound}
S(\epsilon)+ \frac{1}{1-q^{\frac{2}{3\nu+1}}}\left(\frac{ 4 \sqrt 3 H \max_{x,y \in X} \|x-y\|^{1+\nu}}{\eta \theta (1+\nu) \epsilon}\right)^{\frac{2}{3\nu+1}}.
\eeq

\end{itemize}
\end{theorem}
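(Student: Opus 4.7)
The proof splits into two essentially independent parts that mirror the structure of Theorem~\ref{main_theom_hol}, so I would handle (a) and (b) separately.

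For part (a), the plan is to leverage the gradient-descent step that has been embedded into Step~3 of Algorithm~\ref{alg_UAPL}. Since $\bar x^{ag}_k$ in that step is computed via Step~2 of Algorithm~\ref{alg_UPFAG}, the line-search inequality \eqref{line_search2} applies verbatim, giving
\[
\Psi(x^{ag}_k) \;\le\; \Psi(\bar x^{ag}_k) \;\le\; \Psi(x^{ag}_{k-1}) - \frac{\gamma\beta_k}{2}\|\hat g_{_{X,k}}\|^2 + \frac{1}{k}.
\]
Monotonicity of $\Psi(x^{ag}_k)$ follows from the three-way minimum in Step~3 of UAPL. From here the argument is identical to the proofs of Theorem~\ref{main_theom_hol}(b) and Corollary~\ref{main_corl_hol}(a): sum this recursion for $k$ from $\lfloor N/2\rfloor + 1$ to $N$, telescope, and lower-bound the accepted stepsizes $\beta_k$ using \eqref{line_search_cond2}. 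The resulting bound on $\min_k \|\hat g_{_{X,k}}\|^2$ coincides with \eqref{main_nocvx_hol2}, which upon inversion yields the complexity \eqref{main_nocvx_hol3}.

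For part (b), I would bound the number of outer phases first and the inner iterations per phase second. Lemma~\ref{lemma_bndl}(b) ensures that at the start of each phase $\underline\Psi_0 \le \Psi^* \le \bar\Psi_0$, and the Step~4 termination criterion forces the gap to contract by at least the factor $q = 1 - \tfrac12\min\{\eta,1-\eta\}$ when a phase closes; combining this with the H\"older estimate \eqref{f_holder2} applied to bound the initial gap by $H\,\max_{x,y\in X}\|x-y\|^{1+\nu}/(1+\nu)$ yields the phase count \eqref{phase_bnd} as a direct geometric-contraction calculation.

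The main obstacle is bounding the inner iteration count within a single phase, which is where the APL machinery of Lan~\cite{Lan13-2} must be adapted to the H\"older-smooth setting. The key recursion comes from the prox-center update \eqref{Ne1_bnl}: when feasible, the localizer choice \eqref{update_loc} together with the projection-like property of $\bar X_t$ gives $\|x_t - x\|^2 \le \|x_{t-1}-x\|^2 - \|x_t - x_{t-1}\|^2$ for every $x \in \cS_\Psi(l)$; when infeasible, the lower bound updates through $\underline h_t \ge l$ and the phase terminates. Combining the former case with the H\"older bound \eqref{f_holder2} applied to $\tilde x^{ag}_t - x^{md}_t = \alpha_t(x_t - x_{t-1})$, the stepsize choice $\alpha_t = 2/(t+1)$, and the definition of $L(\nu,H)$ in \eqref{def_L} yields an accelerated-type recursion on a potential of the form $\bar\Psi_t - l$ whose solution shows the phase must terminate within $\mathcal{O}\bigl((H D_X^{1+\nu}/(\eta\theta\Delta_s))^{2/(3\nu+1)}\bigr)$ iterations, where $D_X = \max_{x,y\in X}\|x-y\|$ and $\Delta_s$ is the gap in phase $s$. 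Finally, since $\Delta_s$ decreases geometrically with ratio $q$ across phases, summing the per-phase costs produces a geometric series with quotient $q^{2/(3\nu+1)} < 1$; evaluating the sum with the last phase (where $\Delta_{S(\epsilon)} \ge \epsilon$) as the dominant term and adding the $S(\epsilon)$ phase-termination iterations gives the total \eqref{iter_bound}. The constant $4\sqrt{3}$ arises from collecting the absolute constants that come out of the accelerated inner-loop analysis and the arithmetic--geometric comparisons used to convert the H\"older estimate into a telescoping form.
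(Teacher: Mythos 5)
Your proposal follows essentially the same route as the paper: part (a) is proved by reusing the telescoping/stepsize-lower-bound argument of Theorem~\ref{main_theom_hol}.b) and Corollary~\ref{main_corl_hol}.a), and part (b) combines the geometric contraction of the phase gap (with the initial gap bounded via \eqnok{f_holder2}) with a per-phase inner-iteration bound summed as a geometric series. The only difference is that the paper simply cites \cite[Proposition~2, Theorem~3]{Lan13-2} for the inner-loop bound, whereas you sketch that machinery explicitly; your sketch is consistent with the cited analysis up to the precise form of the three-point inequality (in the APL argument the telescoping runs on distances from the fixed prox-center $x_0$ via $\bar X_t$, rather than from an arbitrary point of the level set).
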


\begin{proof}
First, note that part a) can be established by essentially following the same arguments as those in
Theorem~\ref{main_theom_hol}.b) and Corollary~\ref{main_corl_hol}.a).
Second, due to the termination criterion in step 4 of Algorithm~\ref{alg_UAPL}, for any phase $s \ge 1$, we have
\[
\Psi(p_{s+1})-\lb_{s+1} \le q [\Psi(p_s)-\lb_s],
\]
which by induction and together with the facts that $\lb_1=h(p_0,p_1)$ and $\lb_s \le \Psi(x^*)$, clearly imply
\beq \label{cvx_fun_bndl}
\Psi(p_s)-\Psi(x^*) \le  \left[\Psi(p_1)-h(p_0,p_1)\right]q^{s-1}.
\eeq
This relation, as shown in Theorem 4 of \cite{Lan13-2} for convex programming, implies \eqnok{phase_bnd}. Third, \eqnok{iter_bound} is followed by \eqnok{phase_bnd} and \cite[Proposition 2, Theorem 3]{Lan13-2}.
\end{proof}

\vgap

We now add a few remarks about the above results. First, note that the UAPL amd UPFAG methods essentially have the same mechanism to ensure the
global convergence when the problem \eqnok{NLP} is  nonconvex.
To the best of our knowledge, this is the first time that a bundle-level type method is proposed for solving a class of possibly
nonconvex nonlinear programming problems.
Second, note that the bound in \eqnok{iter_bound} is in the same order of magnitude as the optimal bound in \eqnok{cvx_fun_hol3} for convex programming. However, to obtain this bound, we need the boundedness assumption on the feasible set $X$, although we do not need the target accuracy a priori. Third, parts a) and c) of Theorem~\ref{main_theom_bndl} imply that the UAPL method can uniformly solve weakly smooth nonconvex and convex problems without requiring any problem parameters. In particular, it achieves the best known convergence rate for nonconvex problems and its convergence rate is optimal if the problem turns out to be convex.

Finally, in steps 1 and 2 of the UAPL algorithm, we need to solve two subproblems which can be time consuming. Moreover, to establish the convergence of this algorithm, we need the boundedness assumption on $X$ as mentioned above. In the next subsection, we address these issues by exploiting the framework of another bundle-level type method which can significantly reduce the iteration cost.

\subsection{Unified fast accelerated prox-level method}\label{sec_fbndl}
In this subsection, we aim to simplify the UAPL method for solving ball-constrained problems and unconstrained problems with bounded level sets.
Recently, Chen at. al~\cite{CheLanOuyZha14} presented a simplified variant of the APL method, namely fast APL (FAPL) method, for ball constrained convex problems. They showed that the number of subproblems in each iteration is reduced from two to one and presented an exact method to solve the subproblem.
%Moreover, they presented an algorithmic framework to solve unconstrained problems. Specifically, this algorithmic framework applies the FAPL method with an initial ball constrained and continues to expand this ball until it contains the optimal solution.

In this subsection, we first generalize the FAPL method for ball-constrained nonconvex problems and then discuss how to extend it for unconstrained problems with bounded level sets. It should be mentioned that throughout this subsection, we assume that the simple nonsmooth convex term vanishes in the objective function i.e., $\cX \equiv 0$ in \eqnok{NLP}.

Below we present the unified FAPL (UFAPL) method to sovle the problem \eqnok{NLP} with the ball constraint, i.e., $X = {\cal B}(\bar x, R)$.

\begin{algorithm} [H]
	\caption{The unified fast accelerated prox-level (UFAPL) algorithm}
	\label{alg_UFAPL}
	\begin{algorithmic}

\STATE Input: $p_0 \in {\cal B} (\bar x, R)$ and $\eta, \theta \in (0,1)$.
\STATE Set $p_1 \in \Argmin_{x \in {\cal B} (\bar x, R)} h(p_0,x)$, $\lb_1 = h(p_0,p_1)$, $ x^{ag}_0=p_0$, and $k=0$.

\vgap

{\bf For $s=1,2,\ldots$:}

{\addtolength{\leftskip}{0.2in}

Set $\hat x^{ag}_0=p_s$, $\bar \Psi_0 = \Psi(\hat x^{ag}_0)$, $\underline \Psi_0 =\lb_s$, $l=\eta \underline \Psi_0 +(1-\eta)\bar \Psi_0$, and $X'_0=X=\bbr^n$. Also, let $x_0 \in {\cal B} (\bar x, R)$ be arbitrary given.

\vgap

{\bf For $t=1,2,\ldots$:}

}
\vgap
{\addtolength{\leftskip}{0.4in}

\STATE 1. Set $x^{md}_t=(1-\alpha_t)\hat x^{ag}_{t-1}+\alpha_t x_{t-1}$ and define $\underline X_t$ as in \eqnok{update_loc}.

\STATE 2. Update the prox-center: Let $x_t$ be computed by \eqnok{Ne1_bnl} with $x_0 =\bar x$, i.e.,
$
x_t = \argmin_{x \in \underline X_t} \|x-\bar x\|^2.
$

\STATE 3. Update the lower bound: set $k \leftarrow k+1$ and choose $\hat x^{ag}_t$ such that $\Psi(\hat x^{ag}_t) = \min \{\Psi(\hat x^{ag}_{t-1}) , \Psi(\bar x^{ag}_k)\}$, where $\bar x^{ag}_k$ is obtained by Step 2 of Algorithm~\ref{alg_UPFAG} with $X={\cal B}(\bar x, R)$.
%Set $\bar \Psi_t= \Psi(\hat x^{ag}_t)$ and $x^{ag}_k =\hat x^{ag}_t$.
If $\underline X_t = \emptyset$ or $\|x_t-\bar x\| >R$, then terminate this phase with $x^{ag}_k =\hat x^{ag}_t$, $p_{s+1} = \hat x^{ag}_t$, and $\lb_{s+1} = l$.

\vgap

\STATE 4. Update upper bound: let $\tilde x^{ag}_t=(1-\alpha_t)\hat x^{ag}_{t-1}+\alpha_t x_t$. If $\Psi(\tilde x^{ag}_t) < \Psi(\hat x^{ag}_t)$, then set $x^{ag}_k=\hat x^{ag}_t = \tilde x^{ag}_t$ and $\bar \Psi_t = \Psi(\hat x^{ag}_t)$. If $\bar \Psi_t \le l+\theta (\bar \Psi_0-l)$,
then terminate this phase (loop) and set $p_{s+1} = \hat x^{ag}_t$ and $\lb_{s+1} = \lb_s$.
\vgap

\STATE 5. Update localizer: choose any polyhedral $X'_t$ such that $\underline X_t \subseteq X'_t \subseteq \bar X_t$, where $\underline X_t$ and $\bar X_t$ are defined in \eqnok{update_loc} with $X=\bbr^n$.

}
{\addtolength{\leftskip}{0.2in}
{\bf End}

}
{\bf End}

	\end{algorithmic}
\end{algorithm}

We now add a few remarks about the above algorithm. First, note that we do not need to solve the subproblem \eqnok{def_h2} in the UFAPL method.
Moreover, the subproblem \eqnok{Ne1_bnl} in the UFAPL method is to project $\bar{x}$ over a closed polyhedral.
There exist quite efficient methods for performing such a projection on a polyhedral (see e.g.,\cite{CheLanOuyZha14, HagerZhang2015}).
When $G_k = I$, the subproblem associated with finding $\bar x_k^{ag}$ in step 3 of the UFAPL method has a closed-form solution.
Hence, in this case there is only one subproblem to be solved in each iteration of the UFAPL method and this subproblem can be
solved quite efficiently.

Second, note that the UFAPL algorithm can be terminated in step 3 or step 4. Moreover, by combining the convergence results in \cite{CheLanOuyZha14}
and applying similar techniques used in showing the Theorem~\ref{main_theom_hol}, we can establish complexity results similar to the Theorem~\ref{main_theom_bndl} for the UFAPL method.
For simplicity, we do not repeat these arguments here.

Finally, we can extend the above results for the UFAPL method to unconstrained problems. In particular, suppose the level set
\[
{\cal S}_0 = \{x \in \bbr^n \ \ | \Psi(x) \le \Psi(x_0) \},
\]
is bounded, where $x_0$ is the initial point for the UFAPL method. Now, consider the ball-constrained problem
\beq \label{NLP_ball}
\min_{x \in {\cal B} (x_0, R)} \Psi(x),
\eeq
such that $R = \max_{x,y \in {\cal S}_0} \|x-y\| +\delta$ for a given $\delta>0$.

To solve this problem, we could apply the UFAPL method with small modifications. Specifically, we use $X=\bbr^n$ to find $\bar x^{ag}_k$ in step 3 of this method. Now, let $\{x^{ag}_k\}_{k \ge 1}$ be generated by this modified UFAPL method. By steps 3 and 4 of this method, we clearly have $\Psi(x^{ag}_k) \le \Psi(x^{ag}_{k-1})$ for all $k \ge 1$, which implies that $x^{ag}_k \in {\cal S}_0$ for all $k \ge 1$. Hence, all generated points $\{x^{ag}_k\}_{k \ge 1}$ lie in the interior of the aforementioned ball ${\cal B} (x_0, R)$ due to $\delta >0$, which consequently implies that the optimal solution of the problem \eqnok{NLP_ball} is the same as the that of the unconstrained problem $\min_{x \in \bbr^n} \Psi(x)$. Therefore, under the boundedness assumption on
${\cal S}_0$, we can apply the above modified UFAPL method to solve the ball-constrained problem \eqnok{NLP_ball} in order
to solve the original unconstrained problem.
%Let $\bar x^* = \mbox{UAPL}(\bar x, r, \Delta)$ be the output of the UAPL method when applied to problem \eqnok{NLP} with the ball constrained $B(\bar x, r)$ such that $\Psi(\bar x^*) - \Psi^* \le \Delta$. Below, we present the UFAPL method for unconstrained problems.
%
%
%\vgap
%
%\begin{algorithm} [H]
%	\caption{The UFAPL algorithm for unconstrained problems}
%	\label{alg_UFAPL}
%	\begin{algorithmic}
%
%\STATE Choose initial estimation $r_0 \le \|\bar x-x_0\|$ and compute the initial gap $\Delta_0 = \Psi(\bar x) - \min_{x \in B(\bar x,r_0)} h(\bar x,x)$.
%
%\vgap
%
%{\bf For $i=0,1,2,\ldots$:}
%
%{\addtolength{\leftskip}{0.2in}
%\STATE 1. Set $\bar x_i' = {\cal A} (\bar x, r_i, \Delta_i)$ and $\bar x_i'' = {\cal A} (\bar x, 2r_i, \Delta_i)$.
%\STATE 2. If $\Psi(\bar x_i') - \Psi(\bar x_i'') > \Delta_i$, update $r_i \leftarrow 2r_i$ and go to step 1.
%\STATE 3. Otherwise, let $\bar x_i^* = \bar x_i'', \ \ \Delta_{i+1} = \Delta_i/2$ and $r_{i+1} = r_i$.
%
%}
%
%{\bf End}
%
%%\STATE 8. Set $t=t+1$, and go to step 2.
%	\end{algorithmic}
%\end{algorithm}
%
%Note that the above algorithm may stay in the first call to the UAPL method in step 1 due to the nonconvexity of $\Psi$. In this case, we can still find an approximate stationary point of problem \eqnok{NLP}.
\setcounter{equation}{0}
\section{Numerical Experiments}\label{sec_num}

In this section, we show the performance of our algorithms for solving the following two problems, namely, the least square problem with nonconvex regularization term and the sigmoid support vector machine (SVM) problem.

\subsection {Nonconvex regularized least square problem}
In our first experiment, we consider the following least square problem with a smoothly clipped absolute deviation
penalty term given in \cite{FanLi01-1}:
\beq \label{SCAD}
\min_{\|x\| \le 1} \Psi(x) := \frac{1}{2}\|Ax-b\|^2 + m \sum_{j=1}^n p_\lambda (|x_j|),
\eeq
where $A \in \bbr^{m \times n}$, $b \in \bbr^m$, the penalty term $p_{\lambda}: \bbr_+ \to \bbr$ satisfies $p_\lambda (0)=0$, and its derivative is given by
\[
p'_\lambda (\beta)= \lambda \left\{I(\beta \le \lambda)+\frac{\max(0, a \lambda-\beta)}
{(a-1)\lambda}I(\beta > \lambda) \right\}
\]
for some constant parameters $a>2$ and $\lambda>0$. Here, $I (\cdot)$ is the indicator function.
As it can be seen, $p_\lambda (|\cdot|)$ is nonconvex and non-differentiable at $0$.
Therefore, we replace $p_{\lambda}$ by its smooth nonconvex approximation $q_\lambda: \bbr_+ \to \bbr$,
satisfying $q_\lambda (0)=0$ and its derivative is defined by
\[
q'_\lambda (\beta)= \left\{\beta I(\beta \le \lambda)+\frac{\max(0, a \lambda-\beta)}{(a-1)}I(\beta > \lambda) \right\}.
\]
Note that \eqnok{SCAD} with the regularization term $p_\lambda$ substituted by
$q_\lambda$ fits the setting of the  problem \eqnok{NLP}, where $\cX \equiv 0$ and $\Psi(x)=f(x)$ has a Lipschitz continuous gradient.

In this experiment, we assume that the elements of $A$ are randomly drawn from the standard normal distribution, $b$ is obtained by
$b = A \bar x +\xi$, where $\xi \sim N(0,\bar{\sigma}^2)$ is the random noise independent of
$A$, and the coefficient $\bar x$ defines the true linear relationship between rows of $A$ and $b$. Also, we set the parameters to $a=3.7$, $\lambda=0.01$, and the noise level to $\bar{\sigma}=0.1$.

We consider three different problem sizes as $n=2000$, $4000$, and $8000$ with $m=1000$, $2000$, and $4000$, respectively. For each problem size, $10$ different instances $(b,A,\bar x,\xi)$ using the aforementioned approach
were generated.
We implement different algorithms including the UAG, UPFAG using partial line search (the stepsizes are set to \eqnok{def_alpha_lambda_beta} with $\hat \lambda_1 = \hat \beta_1 = 1/L$, where $L$ is an estimation for Lipschitz constant of $f'$), UPFAG with full line search showing by UPFAG-full (the stepsizes are set to \eqnok{def_alpha_lambda} and \eqnok{def_beta} with $\hat \lambda_k = \hat \beta_k = 1/L \ \ \forall k \ge 1$), UAPL, UFAPL and the projected gradient method (PG) described after the presentation of Algorithm~\ref{alg_UAG}. Table~\ref{TB1} and Table~\ref{TB2} summarize the average results of this experiment over $10$ instances for each problem size where the initial point is $x_0^{ag}=0$.

\begin{table}
\caption{Average required number of iterations ($Iter(k)$), runtime ($T(s)$), and the best objective value obtained by different algorithms till finding a solution $\bar{x}^*$ satisfying a desired accuracy for the projected gradient over $10$ instances of the nonconvex regularized least square problem.}
\vgap
%\rowcolors{1}{blue}{blue}
\centering
\label{TB1}
\footnotesize
\begin{tabular}{|c|c|c|c|c|c|c|c|}
\hline
\multicolumn{2}{|c|}{\multirow{2}{*}{$\|g_{_X} (\bar{x}^*)\|^2$}}&&&&&&\\
\multicolumn{2}{|c|}{}&$<10^0$&$<10^{-1}$&$<10^{-2}$&$<10^{-3}$&$<10^{-4}$&$<10^{-5}$\\
\hline
\multicolumn{8}{|c|}{$m=1000, n=2000$}\\
\hline
\multirow{3}{*}{PG}&Iter($k$)&1645 & 3011 &$>3324$ & & & \\
&$T(s)$& 11.7 &21.3 &$>23.7$& & & \\
&$\Psi(x_k^{ag})$&5.76e+01 &5.73e+01  &5.73e+01& 5.73e+01& 5.73e+01& 5.73e+01\\
\hline
\multirow{3}{*}{UAG}&Iter($k$)&368& 540& 607& 652& 752& 816   \\
&$T(s)$& 5.2 &7.6 &8.6 &9.2& 10.7& 11.6   \\
&$\Psi(x_k^{ag})$&5.73e+01 &5.71e+01 &5.71e+01& 5.71e+01 &5.71e+01& 5.71e+01   \\
\hline
\multirow{3}{*}{UPFAG}&Iter($k$)& 323& 414& 483& 659& 767& 831     \\
&$T(s)$& 4.6 &5.9& 6.9& 9.5& 11.1& 12.0    \\
&$\Psi(x_k^{ag})$&5.74e+01 &5.73e+01 &5.73e+01& 5.73e+01 &5.72e+01& 5.72e+01 \\
\hline
\multirow{3}{*}{UPFAG-full}&Iter($k$)&323& 414& 483& 659& 767& 831\\
&$T(s)$&4.6 &6.0& 6.9& 9.6& 11.2& 12.1   \\
&$\Psi(x_k^{ag})$&5.74e+01 &5.73e+01 &5.73e+01& 5.73e+01 &5.72e+01& 5.72e+01 \\
\hline
\multirow{3}{*}{UAPL}&Iter($k$)&214& 275& 332& 414& 455& 520   \\
&$T(s)$&52.9& 68.7 &83.2& 103.9 &114.3& 130.7     \\
&$\Psi(x_k^{ag})$& 5.75e+01 &5.74e+01 &5.74e+01& 5.74e+01 &5.74e+01& 5.74e+01\\
\hline
\multirow{3}{*}{UFAPL}&Iter($k$)& 209& 312& 361& 426& 472& 517
 \\
&$T(s)$& 4.4 &6.6 &7.6 &8.8 &9.8 &10.7    \\
&$\Psi(x_k^{ag})$&5.72e+01 &5.71e+01 &5.71e+01& 5.71e+01 &5.71e+01& 5.71e+01\\
%\hline
%\end{tabular}
%\end{table}
%
%
%
%\begin{table}
%\caption{Average required number of iterations, runtime, and the best objective value found till reaching a desires accuracy for $\|g_{_X} (\bar{x}^*)\|^2 $ over $10$ runs of algorithms for the penalized least square problem with $m=2000, n=4000$.}
%\vgap
%%\rowcolors{1}{blue}{blue}
%\centering
%\label{TB2}
%\footnotesize
%\begin{tabular}{|c|c|c|c|c|c|c|c|}
%\hline
%\multicolumn{2}{|c|}{\multirow{2}{*}{$\|g_{_X} (\bar{x}^*)\|^2$}}&&&&&&\\
%\multicolumn{2}{|c|}{}&$<10^0$&$<10^{-1}$&$<10^{-2}$&$<10^{-3}$&$<10^{-4}$&$<10^{-5}$\\
%%
\hline
\multicolumn{8}{|c|}{$m=2000, n=4000$}\\
\hline
\multirow{3}{*}{PG}&Iter($k$)&2030 &5340& $>7000$ & & & \\
&$T(s)$& 64.4 & 170.2 &$>225.7$& & & \\
&$\Psi(x_k^{ag})$&1.60e+02 &1.60e+02 &1.60e+02& 1.60e+02& 1.60e+02& 1.60e+02\\
\hline
\multirow{3}{*}{UAG}&Iter($k$)&415& 529& 683& 897& 1190& 1527   \\
&$T(s)$& 26.3 &33.7 &43.3 &57.0 &75.7 &96.9\\
&$\Psi(x_k^{ag})$&1.60e+02 &1.60e+02 &1.60e+02& 1.60e+02& 1.60e+02& 1.60e+02\\
\hline
\multirow{3}{*}{UPFAG}&Iter($k$)& 434& 535& 751& 962& 1253& 1591   \\
&$T(s)$& 27.8 &34.1& 47.4& 60.8& 79.4& 101.0   \\
&$\Psi(x_k^{ag})$&1.60e+02 &1.60e+02 &1.60e+02& 1.60e+02& 1.60e+02& 1.60e+02\\
\hline
\multirow{3}{*}{UPFAG-full}&Iter($k$)&434& 535& 751& 962& 1253& 1591\\
&$T(s)$&28.0 &34.4 &48.2 &61.7 &80.1& 101.4   \\
&$\Psi(x_k^{ag})$&1.60e+02 &1.60e+02 &1.60e+02& 1.60e+02& 1.60e+02& 1.60e+02\\
\hline
\multirow{3}{*}{UAPL}&Iter($k$)&234& 404& 514& 672& 858& 1013  \\
&$T(s)$&152.8 &264.6& 337.0& 441.0& 564.2& 666.6   \\
&$\Psi(x_k^{ag})$& 1.60e+02 &1.60e+02 &1.60e+02& 1.60e+02& 1.60e+02& 1.60e+02\\
\hline
\multirow{3}{*}{UFAPL}&Iter($k$)& 242& 354& 490& 547& 596& 767   \\
&$T(s)$& 20.5 &29.7 &41.1& 45.8& 49.9& 63.7   \\
&$\Psi(x_k^{ag})$&1.60e+02 &1.60e+02 &1.60e+02& 1.60e+02& 1.60e+02& 1.60e+02\\
\hline
\end{tabular}
\end{table}

\begin{table}
\caption{Average required number of iterations ($Iter(k)$), runtime ($T(s)$), and the best objective value obtained by different algorithms till finding a solution $\bar{x}^*$ satisfying a desires accuracy for the projected gradient over $10$ instances of the nonconvex regularized least square problem with $m=4000, n=8000$.}
\vgap
%\rowcolors{1}{blue}{blue}
\centering
\label{TB2}
\footnotesize
\begin{tabular}{|c|c|c|c|c|c|c|c|}
\hline
\multicolumn{2}{|c|}{\multirow{2}{*}{$\|g_{_X} (\bar{x}^*)\|^2$}}&&&&&&\\
\multicolumn{2}{|c|}{}&$<10^0$&$<10^{-1}$&$<10^{-2}$&$<10^{-3}$&$<10^{-4}$&$<10^{-5}$\\
\hline
\multirow{3}{*}{PG}&Iter($k$)&1364& 2742& $>3471$&  & & \\
&$T(s)$& 169.5 &341.1 &$>430.6$ & & & \\
&$\Psi(x_k^{ag})$&3.79e+02& 3.79e+02&3.79e+02& 3.79e+02&3.79e+02 &3.79e+02   \\
\hline
\multirow{3}{*}{UAG}&Iter($k$)&208& 299& 487& 757& 1184& 1689    \\
&$T(s)$& 50.9 &73.6& 119.8& 186.8& 292.5& 416.6   \\
&$\Psi(x_k^{ag})$&3.79e+02& 3.79e+02&3.79e+02& 3.79e+02&3.79e+02 &3.79e+02\\
\hline
\multirow{3}{*}{UPFAG}&Iter($k$)& 204& 297& 483& 767& 1179& 1693\\
&$T(s)$& 50.0 &73.0 &119.3& 188.7& 289.9& 416.4   \\
&$\Psi(x_k^{ag})$&3.79e+02& 3.79e+02&3.79e+02& 3.79e+02&3.79e+02 &3.79e+02\\
\hline
\multirow{3}{*}{UPFAG-full}&Iter($k$)&204& 297& 483& 767& 1179& 1693\\
&$T(s)$&50.7 &73.6 &119.7& 190.2& 291.3& 418.1  \\
&$\Psi(x_k^{ag})$&3.79e+02& 3.79e+02&3.79e+02& 3.79e+02&3.79e+02 &3.79e+02\\
\hline
\multirow{3}{*}{UAPL}&Iter($k$)&146& 203& 339& 434& 598& 721   \\
&$T(s)$&322.1 &448.8& 754.2& 967.2& 1332.5& 1611.4  \\
&$\Psi(x_k^{ag})$& 3.79e+02& 3.79e+02&3.79e+02& 3.79e+02&3.79e+02 &3.79e+02\\
\hline
\multirow{3}{*}{UFAPL}&Iter($k$)& 124 &198& 309& 362& 561 &677   \\
&$T(s)$& 38.8 &62.1& 97.0& 113.7& 175.8& 212.1   \\
&$\Psi(x_k^{ag})$&3.79e+02& 3.79e+02&3.79e+02& 3.79e+02&3.79e+02 &3.79e+02\\
\hline
\end{tabular}
\end{table}

The following observations can be made from the numerical results.
First, the projected gradient method performs the worst among all the compared algorithms. Second, the performances of UAG and
the variants of UPFAG methods are similar to each other and the type of line search does not have a significant affect on the results. One possible reason might be that only a few number of line searches are needed
when solving this problem.
 Third, while the UAPL and UFAPL methods perform similarly in terms of the iterations and objective value, the former takes significantly longer CPU time as expected due to the existence of two subproblems at each iteration. Finally, the bundle-level type methods outperform the accelerated gradient methods. This observation has been also made for convex problems (see e.g., \cite{ CheLanOuyZha14, Lan13-1}).

\subsection {Nonconvex support vector machine problem} In our second experiment,
we consider a support vector machine problem with nonconvex sigmoid loss function \cite{MasBaxBarFre99}, that is
\beq \label {sigmoid_loss}
\min_{\|x\| \le a} \left\{\Psi(x) :=\frac{1}{m}\sum_{i=1}^m \left[1 - \tanh \left( v_i \langle x, u_i \rangle \right) \right] + \frac{ \lambda}{2} \|x\|_2^2\right\},
\eeq
for some $\lambda>0$. Note that the first term in the objective function is smooth, nonconvex, and has  Lipschitz continuous gradient. Hence, this problem also fits into the setting of problem \eqnok{NLP} with $\cX \equiv 0$.
Here, we assume that each data point $(u_i, v_i)$ is drawn from the uniform distribution on
$[0,1]^n \times \{-1,1\}$, where $u_i \in \bbr^n$ is the feature vector and $v_i \in \{-1,1\}$ denotes the corresponding
label. Moreover, we assume that $u_i$ is sparse with $5\%$ nonzero components and
$v_i = \text{sign} \left(\langle \bar{x} , u_i \rangle \right)$ for some $\bar{x} \in \bbr^n$ with $\|\bar x\| \le a$.
In this experiment, we set $\lambda=0.01$, $a=50$, and consider two different problem sizes as
$n=2000$ and $4000$ with $m=1000$ and $2000$, respectively. The initial point is randomly chosen within the
ball centered at origin with radius $a$.
Similar to the first experiment, we report the average results of running different algorithms over $10$ instances for each problem size.
Moreover, in order to further assess the quality of the generated solutions, we also
report the classification error evaluated at the classifier $x$ given by
\beq \label{mis_error}
er(x):=\frac{|\left \{i: v_i \neq \text{sign} (\langle x , u_i \rangle), i = 1, ..., K \right \}|}{K},
\eeq
where $K=10000$ for each problem instance.
Table~\ref{TB3} summarizes the results of this experiment over $10$ instances for each problem size.

Similar to the previous experiment, it can be seen that the PG method again performs the worst among all the compared
algorithms and the accelerated prox-level methods also outperform the other algorithms. The UFAPL has the best performance in terms of the iteration, runtime and the classification error.

\begin{table}
\caption{Average required number of iterations (Iter(k)), runtime ($T(s)$), objective value, and classification error found till reaching a desired accuracy for $\|g_{_X} (\bar{x}^*)\|^2 $ over $10$ instances of the sigmoid SVM problem.}
\vgap
%\rowcolors{1}{blue}{blue}
\centering
\label{TB3}
\footnotesize
\begin{tabular}{|c|c|c|c|c|c|c|c|}
\hline
\multicolumn{2}{|c|}{\multirow{2}{*}{$\|g_{_X} (\bar{x}^*)\|^2$}}&&&&&&\\
\multicolumn{2}{|c|}{}&$<10^0$&$<10^{-1}$&$<10^{-2}$&$<10^{-3}$&$<10^{-4}$&$<10^{-5}$\\
\hline
\multicolumn{8}{|c|}{$m=1000, n=2000$}\\
\hline
\multirow{4}{*}{PG}&Iter($k$)&1& 6& 805& 1646& 2223& 2939  \\
&$T(s)$& 0.0 &0.0& 3.1& 6.2& 8.4 &11.1  \\
&$\Psi(x_k^{ag})$&4.27e+0& 4.20e+0& 7.71e-1& 2.10e-1& 1.76e-1& 1.70e-1  \\
&$er(x_k^{ag})$&47.53& 46.22& 40.50& 32.68& 31.17& 30.96 \\
\hline
\multirow{4}{*}{UAG}&Iter($k$)&1&5& 94& 195& 311& 559  \\
&$T(s)$& 0.0 &0.1& 0.7 &1.5& 2.4& 4.2   \\
&$\Psi(x_k^{ag})$&4.27e+0& 4.19e+0 &4.17e-1& 2.09e-01& 1.98e-01& 1.67e-01   \\
&$er(x_k^{ag})$& 47.53 &45.91& 32.78& 31.41 &30.99& 30.86   \\

\hline
\multirow{4}{*}{UPFAG}&Iter($k$)& 1& 4& 92& 192& 314& 583   \\
&$T(s)$& 0.0 &0.0 &0.7 &1.5& 2.5& 5.2 \\
&$\Psi(x_k^{ag})$&4.27e+0& 4.19e+0& 4.19e-01 &2.10e-01& 1.99e-01& 1.97e-01    \\
&$er(x_k^{ag})$&47.53 &45.94 &32.75& 31.42& 30.99& 30.95   \\
\hline
\multirow{4}{*}{UPFAG-full}&Iter($k$)&1& 4& 92& 192& 314& 583\\
&$T(s)$&0.0 &0.0& 0.7& 1.5& 3.2& 13.8    \\
&$\Psi(x_k^{ag})$&4.27e+0& 4.19e+0& 4.19e-01 &2.10e-01& 1.99e-01& 1.97e-01    \\
&$er(x_k^{ag})$&47.53 &45.94 &32.75& 31.42& 30.99& 30.95   \\
\hline
\multirow{4}{*}{UAPL}&Iter($k$)&1& 5& 12& 40& 61& 95   \\
&$T(s)$&0.2& 0.9 &2.3 &8.6 &13.6& 21.4       \\
&$\Psi(x_k^{ag})$& 1.04e+01& 3.68e+0& 7.53e-01& 2.56e-01 &2.44e-01 &2.43e-01   \\
&$er(x_k^{ag})$& 43.51& 42.70 &39.28& 31.51 &31.22& 31.23   \\
\hline
\multirow{4}{*}{UFAPL}&Iter($k$)& 1& 3& 27& 42& 64& 100    \\
&$T(s)$& 0.0 &0.1 &0.5& 0.7 &1.0 &1.6     \\
&$\Psi(x_k^{ag})$&4.27e+0& 3.99e+0& 4.01e-01 &2.17e-01 &2.00e-01 &1.98e-01   \\
&$er(x_k^{ag})$&47.53 &45.04& 35.04& 31.59 &31.09& 30.97   \\
\hline
%\end{tabular}
%\end{table}
%
%
%
%\begin{table}
%\caption{Average required number of iterations, runtime, the best objective value found at $\bar x^*$, and classification error $er(\bar{x}^*)$ till reaching a desires accuracy for $\|g_{_X} (\hat{x}^*)\|^2 $ over $10$ runs of algorithms for the sigmoid SVM problem with $m=2000, n=4000$.}
%\vgap
%%\rowcolors{1}{blue}{blue}
%\centering
%\label{TB1}
%\footnotesize
%\begin{tabular}{|c|c|c|c|c|c|c|c|}
%\hline
%\multicolumn{2}{|c|}{\multirow{2}{*}{$\|g_{_X} (\hat{x}^*)\|^2$}}&&&&&&\\
%\multicolumn{2}{|c|}{}&$<10^0$&$<10^{-1}$&$<10^{-2}$&$<10^{-3}$&$<10^{-4}$&$<10^{-5}$\\
%%
\hline
\multicolumn{8}{|c|}{$m=2000, n=4000$}\\
\hline
\multirow{4}{*}{PG}&Iter ($k$)&1& 47& 1648& 3389& 4853& 6324    \\
&$T(s)$& 0.0& 0.9& 31.2& 64.2& 91.9& 119.5     \\
&$\Psi(x_k^{ag})$&5.57e+0& 4.93e+0& 8.41e-01 &2.48e-01& 2.07e-01& 2.01e-01   \\
&$er(x_k^{ag})$&53.61 &36.17& 31.22& 26.39 &25.29& 25.28    \\
\hline
\multirow{4}{*}{UAG}&Iter ($k$)&1& 15& 121& 232& 356& 584   \\
&$T(s)$& 0.0 &0.6& 4.6& 8.9& 13.6& 22.2    \\
&$\Psi(x_k^{ag})$&5.57e+0& 4.93e+0& 6.62e-01& 3.15e-01& 2.91e-01& 2.90e-01    \\
&$er(x_k^{ag})$& 53.61 &35.51& 27.46& 26.15 &26.02& 25.89    \\

\hline
\multirow{4}{*}{UPFAG}&Iter ($k$)& 1& 14& 118& 237 &374 &648\\
&$T(s)$&0.0 &0.5& 4.5& 9.2& 14.4& 27.4    \\
&$\Psi(x_k^{ag})$&5.57e+0& 4.94e+0& 6.69e-01& 3.17e-01& 2.92e-01 &2.90e-01       \\
&$er(x_k^{ag})$&53.61 &35.59& 27.45 &26.14& 26.00& 25.91    \\
\hline
\multirow{4}{*}{UPFAG-full}&Iter ($k$)&1& 14& 118& 235& 360& 635   \\
&$T(s)$&0.0 &0.5& 4.5& 9.5& 14.5& 52.0    \\
&$\Psi(x_k^{ag})$&5.57e+0& 4.94e+0& 6.69e-01& 3.15e-01& 2.91e-01 &2.90e-01      \\
&$er(x_k^{ag})$&53.61 &35.59& 27.45 &26.14& 26.00& 25.91    \\
\hline
\multirow{4}{*}{UAPL}&Iter ($k$)& 1& 5& 11& 26& 41& 98  \\
&$T(s)$&0.6 &2.8& 6.5& 15.8& 25.2& 62.3         \\
&$\Psi(x_k^{ag})$& 9.60e+0& 2.88e+0& 5.98e-01& 3.63e-01& 3.44e-01& 3.43e-01      \\
&$er(x_k^{ag})$&32.44 &32.41& 28.54& 26.48& 26.23& 26.35   \\
\hline
\multirow{4}{*}{UFAPL}&Iter ($k$)& 1& 4& 27& 45& 66& 92
 \\
&$T(s)$& 0.1 &0.3& 1.5& 2.5& 3.7& 5.1     \\
&$\Psi(x_k^{ag})$&5.57e+0& 4.76e+0& 4.77e-01& 2.65e-01& 2.36e-01 &2.34e-01   \\
&$er(x_k^{ag})$&53.61 &34.74& 26.87& 25.90& 25.33& 25.30     \\
\hline
\end{tabular}
\end{table}

\section{Concluding Remarks}
In this paper, we extend the framework of uniformly optimal algorithms, currently designed for convex programming, to nonconvex nonlinear programming.
In particular, by incorporating a gradient descent step into the framework of uniformly optimal convex programming methods, namely, accelerated gradient and bundle-level type methods, and enforcing the function values evaluated at each iteration of these methods non-increasing, we present unified algorithms for minimizing composite objective functions given by summation of a function $f$ with Holder continuous gradient and simple convex term over a convex set. We show that these algorithms exhibit the best known convergence rate when $f$ is nonconvex and possess the optimal convergence rate if $f$ turns out to be convex. Therefore, these algorithms allow us to have a unified treatment for nonlinear programming problems regardless of their smoothness level and convexity property.
Furthermore, we show that the gradient descent step can be replaced by some Quasi-Newton steps to possibly improve the practical performance of these algorithms. Some numerical experiments are also presented to show the performance of our developed algorithms for
solving a couple of nonlinear programming problems in data analysis.

\bibliographystyle{plain}
\bibliography{../glan-bib}
\end{document}